\newtheorem{theorem}{Theorem}[section]
\newtheorem{lemma}[theorem]{Lemma}
\newtheorem{corollary}[theorem]{Corollary}
\newtheorem{proposition}[theorem]{Proposition}
\theoremstyle{definition}
\newtheorem{definition}[theorem]{Definition}
\theoremstyle{remark}
\newtheorem{remark}[theorem]{Remark}
\numberwithin{equation}{section}
\newcommand{\N}{\mathbb{N}}
\newcommand{\R}{\mathbb{R}}
\newcommand{\Rn}{\R^n}
\newcommand{\tr}{\operatorname{tr}}
\renewcommand{\div}{\operatorname{div}}
\newcommand{\Fcrit}{\tau_c}
\newcommand{\FT}{\tau_0}
\newcommand{\dens}{\rho}
\begin{document}
\title[Taylor Scaling]{A proof of Taylor scaling for curvature-driven dislocation motion through random arrays of obstacles}

\author{Luca Courte}
\address[Luca Courte]{Abteilung f\"ur Angewandte Mathematik,
	Albert-Ludwigs-Universit\"at Freiburg, Raum 228, Hermann-Herder-Straße 10, 79104 Freiburg i. Br.}
\email{luca.courte@mathematik.uni-freiburg.de}
\urladdr{https://aam.uni-freiburg.de/mitarb/courte/}

\author{Patrick Dondl}
\address[Patrick Dondl]{Abteilung f\"ur Angewandte Mathematik,
	Albert-Ludwigs-Universit\"at Freiburg, Raum 217, Hermann-Herder-Straße 10, 79104 Freiburg i. Br.}
\email{patrick.dondl@mathematik.uni-freiburg.de}
\urladdr{https://aam.uni-freiburg.de/agdo/}

\author{Michael Ortiz}
\address[Michael Ortiz]{California Institute of Technology (MC 105-50), Pasadena, CA 91125}
\email{ortiz@aero.caltech.edu}
\urladdr{http://www.ortiz.caltech.edu/}

\subjclass[2010]{35R60,35D40,74A40,74C99}

\keywords{Dislocations, random media, Taylor scaling, flow stress, curvature flow, differential inclusions, dry friction, viscosity solutions}

\date{\today}

\begin{abstract}
We prove Taylor scaling for dislocation lines characterized by line-tension and moving by curvature under the action of an applied shear stress in a plane containing a random array of obstacles. Specifically, we show--in the sense of optimal scaling--that the critical applied shear stress for yielding, or percolation-like unbounded motion of the dislocation, scales in proportion to the square root of the obstacle density. For sufficiently small obstacle densities, Taylor scaling dominates the linear-scaling that results from purely energetic considerations and, therefore, characterizes the dominant rate-limiting mechanism in that regime.
\end{abstract}

\maketitle

\section{Introduction}

{\sl Taylor scaling}, i.~e., a power-law dependence
\begin{equation}\label{I0VsvO}
    \Fcrit \sim \sqrt{\dens}
\end{equation} %
of the flow stress (or over-stress in the presence of lattice friction) $\Fcrit$ for activation of plastic slip in a single crystal on some appropriate dislocation density $\dens$ is one of the accepted tenets of physical metallurgy (cf., e.~g., \cite{hirth:1968, hull:1984}). The precise measure $\dens$ of dislocation density depends on the rate-limiting mechanism under consideration. For instance, in his seminal work, Taylor \cite{edtm,j1l} sought to estimate the stress $\Fcrit$ required to break an edge dislocation dipole, and more generally regular lattices of parallel dislocations, and showed that $\Fcrit \sim 1/l$, with $l$ a measure of dislocation spacing. Taylor scaling of the form \eqref{I0VsvO} is then obtained by noting that the line dislocation density per unit volume in dislocation lattices scales as $\dens \sim l^{-2}$.

Taylor scaling also arises in theories of {\sl forest hardening} based on the line tension approximation (cf., e.~g., \cite{hull:1984}). In these theories, the aim is to estimate the increase in the flow stress for the activation of a slip system, or primary system, due to dislocations belonging to other systems, or secondary systems. Such secondary or 'forest' dislocations pierce through the primary slip planes, and pin down the primary dislocations moving in those planes through a number of mechanisms such as jog formation and other dislocation reactions. Often, forest dislocations are idealized as impenetrable point obstacles opposing the motion of the primary dislocations. Under these assumptions, simple line-tension estimates for the flow stress required to bypass a pair of pinning obstacles gives the relation $\Fcrit \sim 1/l$, with $l$ the distance between the pinning points. A meanfield type argument \cite{kocks:1966, ortiz:1982} then yields a flow stress of the for \eqref{I0VsvO}, with $\dens$ the density of secondary or forest dislocations and $1/\sqrt{\dens}$ supplying an estimate of the mean distance between pinning points. 

More detailed numerical treatments of the forest hardening mechanism, such as the seminal calculations of Foreman and Makin \cite{Foreman.1966, Foreman.1967}, account for the percolation-like motion of dislocation lines in the primary slip plane through random arrays of point obstacles. In these calculations, which are based on the line-tension approximation, parts of the dislocation line are observed to become de-pinned upon an increase in the resolved shear stress and to jump to more stable obstacles, where they become pinned again. In so doing, the dislocation line sweeps through a certain area of the slip plane. The macroscopic volume average of all such slip areas gives the incremental slip strain in units of Burgers vector. For point obstacles obeying a Poisson distribution on the slip plane, the calculations of Foreman and Makin \cite{Foreman.1966, Foreman.1967} show that the dislocation lines travel an increasing distance, resulting in increasing incremental slip strains, as the applied resolved shear stress is increased. Eventually, a critical, or {\sl percolation}, value $\Fcrit$ of the applied resolved shear stress is attained at which no equilibrium configuration exists and the dislocation jumps become unbounded. Foreman and Makin \cite{Foreman.1966, Foreman.1967} observed numerically that $\Fcrit$ appeared to scale as $1/l$, with $l$ a measure of the distance between point defects. This relation in turn again gives Taylor scaling \eqref{I0VsvO} upon observing that $l \sim 1/\sqrt{\dens}$, with $\dens$ the density of forest dislocation line per unit volume.

In this article, we prove Taylor scaling for the dislocation tension model of forest hardening by a random array of obstacles in the sense of {\sl optimal scaling}, i.~e., we show that there is a constant $C>0$ so that
\[
\frac{1}{C}\sqrt{\dens}  \le \Fcrit - \FT \le C\sqrt{\dens},
\]
where $\FT$ accounts for lattice friction. Our setting is that of a line-tension model for the motion-by-curvature of dislocations in a given slip plane, where forest dislocations -- acting as small, disc shaped obstacles -- are distributed according to a two-dimensional Poisson point process with intensity $\dens$. This result is stated in detail in Corollary \ref{cor:scaling}. We recall that optimal scaling laws are established buy producing upper and lower bounds of a power-law type with matching exponents for all parameters in both bounds. 

Optimal scaling methods were pioneered by Kohn and M\"uller \cite{KM92} as part of their seminal work on branched structures in martensite, and have been since successfully applied to a number of related problems, including shape-memory alloys, micromagnetics, crystal plasticity, and others \cite{KM92, KM94, CKO99, Conti00, ContiOrtiz05}. We also note that, whereas the line tension approximation pervades the better part of physical metallurgy, (cf., e.~g., \cite{hirth:1968, hull:1984}), rigorous results showing that line tension indeed describes the energy of sufficiently dilute dislocations, or, equivalently, dislocations of sufficiently small core radius, have only recently become available \cite{Garroni.20066n, Conti:2015, Fonseca.2021}.

It bears emphasis that the results presented in this paper stand in contrast to previous estimates of the critical resolved shear stress that follow from the work of Garroni and M\"uller \cite{Garroni.20066n, Garroni.2005}. These differences stem directly from variances in the treatment of the problem and the definition of critical resolved shear stress. Thus, the present approach aims to analyze the motion of dislocations through random arrays of obstacles using a kinetic, or dynamic, formulation of the problem. Based on this formulation, we seek viscosity solutions with the aid of a comparison principle (cf., e.~g., \cite{CourteDondl}). Criticality in this context is identified with the failure of a stationary viscosity supersolution to exist. Conditions for the existence of such supersolutions, equivalent to Taylor scaling, are given in Theorem~\ref{thm:existence_of_subsupersol}. By contrast, Garroni and M\"uller \cite{Garroni.20066n, Garroni.2005} study the energetics of a Peierls-Nabarro model proposed by Koslowski {\sl et al.} \cite{KCO}. In this energetic setting, criticality is associated with a change of the energy minimizing state. This notion of criticality in the homogenization limit of \cite{Garroni.20066n, Garroni.2005} results in a scaling of the critical resolved shear stress proportionally to the density of defects, each defect contributing in accordance with its own capacity.

This discrepancy exemplifies the difference in problems belonging to the ``wiggly class'' \cite{Abeyaratne.1996} between the gradient flow of the $\Gamma$-limit of the energy and the limit of the gradient flows of the energy. Indeed, such flows are only equivalent for problems in the ``Serfaty class'' \cite{Serfaty.2011}, for which $\Gamma$-limit and gradient flow commute. For the problem considered here, it is therefore important to consider critical states instead of energy minimizers \cite{DKW}. Indeed, for sufficiently small obstacle densities, as are likely to be found in practice, Taylor scaling dominates linear scaling and characterizes the dominant rate-limiting mechanism for dislocation motion.

The remainder of this work is organized as follows. In section \ref{sec:model}, we describe in detail our mathematical model and results
Section \ref{sec:comp} is devoted to the proof of a comparison principle for our model. The main theorem is finally proved in section \ref{sec:lower_bound} and \ref{sec:upper_bound} for the lower and upper scaling bound, respectively.

\subsection{The mathematical model and the main result} \label{sec:model}
Let $\Gamma(t)$ be a curve in $\R^2$ describing the motion of a curvature driven interface (our dislocation line) through a heterogeneous medium (to be precise, a homogeneous medium with random obstacles) where the obstacles exert an additional dry friction \cite{Dondl_20c}. In this setting, the equation of motion is given by
\begin{equation}\label{eq:main}
\mathcal{F}(v_n(\xi)) + \varphi(\xi) \partial R(v_n(\xi)) \ni \kappa(\xi) + F, \text{ for all } \xi \in \Gamma(t),
\end{equation}
where $v_n$ is the dislocation's normal velocity and $\mathcal{F} : \R \to \mathcal{P}(\R)\setminus\{\emptyset\}$ is a nonlinear, set-valued function, $R(v) = |v|$ and $\partial R$ its subdifferential, $\kappa$ the mean curvature, $\varphi:\R^2 \to [0, \infty)$ a random function describing the heterogeneous medium and $F > 0$ some external driving force. The kinetic relation $\mathcal{F}$ describes how the exerted force gets translated to the velocity of the interface. It is reasonable to assume that $\mathcal{F} = \partial\Psi$ where $\Psi$ is a dissipation potential and $\partial \Psi$ its subdifferential.

Our model includes the possibility for a stick-slip kinetic relation to account for lattice friction. The forest dislocations (obstacles) always oppose the motion of the interface crossing them, as the additional dry friction from the $\varphi(\xi) \partial R(v_n(\xi))$-term requires a fixed amount of energy to overcome. Our results remain valid if the kinetic relation is replaced by the simple viscous one $\mathcal{F}(v)=v$.

Equations of curvature driven interface motion are commonly used to describe different phenomenona arising in materials science, e.g., phase propagation and dislocation motion. It is therefore not surprising that  curvature flow in random media has received significant attention in the last years \cite{Coville:2009uv, Dirr:2008ki, Dirr:2006ui, Dondl_11b, Armstrong.2018}. Linearized and linear, non-local models were investigated in \cite{Dondl_12a, Bodineau:2013ur, Dondl_16b,Dondl_19b, Dondl:2015kk, Dondl_17c, Dondl_20c}.

A curve evolving according to \eqref{eq:main} need not remain an immersed curve for all time even when starting with a flat initial curve (which is properly immersed) as a pinch off may occur. It might be the easiest to imagine an obstacle that offers a large amount of resistance in a positive set around its core. The curve will wrap around the obstacle and pinch off, leaving a circle around the obstacle behind, instead of moving through the obstacle. To be able to deal with this change in topology, we will use the level-set formulation of \eqref{eq:main}.

Let $\Gamma(t)$ be an interface and $d_s(\cdot, t)$ its signed distance function. We use the convention $d_s(\cdot, t) \le 0$ in the interior of $\Gamma(t)$ and if $\Gamma(t)$ splits $\Rn$ in two, then we assume (after applying a rotation) that $d_s(x, y, t) \le 0$ if $y \to +\infty$. We define $\Omega_+(t) \coloneqq \{ d_s(t) > 0 \}$ as the interior set of the interface and $\Omega_-(t) \coloneqq \{ d_s(t) < 0 \}$. Moreover, it is reasonable to assume enough regularity on $\Gamma(t)$ such that $\Rn = \Omega_-(t) \cup \Omega_+(t) \cup \Gamma(t)$ and that $\partial \Omega_-(t) = \partial \Omega_+(t) = \Gamma(t)$. Finally, the normal of $\Gamma(t)$ is chosen to be the outward normal of $\Omega_+$. Now, for a general function $u : \R^2\times I \to \R$ such that $u(\cdot, t) < 0 \iff d_s(\cdot, t) < 0$ and $u(\cdot, t) > 0 \iff d_s(\cdot, t) > 0$. The zero level-set of $u(\cdot, t)$ is a level-set solution of \eqref{eq:main} if $u$ satisfies
\begin{equation}\label{eq:main_levelset}
|\nabla u| \mathcal{F}(\tfrac{u_t}{|\nabla u|}) + \varphi(\cdot) \eta_{\theta}(\tfrac{u}{|\nabla u|}) |\nabla u| \partial R(u_t) \ni |\nabla u|\div(\tfrac{\nabla u}{|\nabla u|}) + |\nabla u| F,
\end{equation}
where $\eta_{\theta}$ is a radially symmetric $C^\infty(\R)$ function with $\eta_{\theta}(0) = 1$, $\eta_{\theta}$ is decreasing on $[0, \infty)$, and $\eta_{\theta}(s) = 0$ for $s \ge \theta$.

Our results are valid as long as \eqref{eq:main_levelset} satisfies a comparison principle and a unique viscosity solution exists (see \cite{CourteDondl} for a discussion on viscosity solutions for partial differential inclusions). We refer also to section \ref{sec:comp} where we prove existence and that a comparison principle holds for $\mathcal{F}(a) = a + \tau \partial R(a)$ with $\tau \ge 0$.

Before continuing the discussion let us fix the shape of the random function $\varphi$. In line with our setting of a dislocation subject to line tension in a field of obstacles, we first define an obstacle set consisting of small discs centered around the the points in a Poisson process. The additional dry friction $\varphi$ when crossing obstacles is then the characteristic function of the obstacle set, after some mollification and multiplication by a strength-prefactor.
\begin{definition}[Obstacles]\label{def:obstacles}
	Let $r_1 > r_0 > 0$ and define the random set of obstacles
	\[
	O \coloneqq \bigcup_{i=0}^\infty B_{r_0+{\frac{r_1-r_0}{2}}}(x - x_i(\omega), y - y_i(\omega))
	\]
	where the tuples $x_i, y_i$ are generated by a $2$-dimensional Poisson point process of intensity $\dens > 0$.
	The random function $\varphi : \R \times \R \times \Omega \to [0, \infty)$ is then given by
	\[
	\varphi(x, y, \omega) = - f \eta_{\frac{r_1-r_0}{2}} * \chi_{O}(x, y),
	\]
	with $f \ge 0$ the maximal pinning force of the obstacles and $\eta_{\frac{r_1-r_0}{2}}$ is the rescaled standard mollifier with support in $B_{\frac{r_1-r_0}{2}}(0)$.
\end{definition}

\begin{remark}\label{rk:mean_site_distance}
	The mean distance between a site $(x_i(\omega), y_i(\omega))$ and the closest next site is proportional to $\frac{1}{\sqrt{\dens}}$.
	
\end{remark}

For equation \eqref{eq:main_levelset}, let us define the quantities $\underline{F}_{\rm crit}, \overline{F}_{\rm crit}$. They are the critical forces such that
\begin{itemize}
	\item whenever $F \ge \underline{F}_{\rm crit}$ any solution is ballistic, i.~e., the dislocation passes over a strictly positive mean area per time unit,
	\item whenever $F \le \overline{F}_{\rm crit}$ any solution gets pinned, i.~e., the dislocation remains bounded in the vicinity of its initial state for all time.
\end{itemize}
It is easy to see that there is a (non optimal) critical force $\underline{F}_{\rm crit} \in (0, \infty]$ such that whenever $F \ge \underline{F}_{\rm crit}$ there exists a ballistic (graphical) subsolution. As the solution to equation \eqref{eq:main_levelset} has to stay above this subsolution (due to the comparison principle), the solution is also ballistic. On the other hand, in \cite{Dondl_11b} it was shown that there is also a critical force $\overline{F}_{\rm crit} \in (0, \infty)$ such that for all $F \le \overline{F}_{\rm crit}$ there is a stationary (graphical) supersolution. It has been shown that there are cases where $\overline{F}_{\rm crit} < \underline{F}_{\rm crit}$, i.~e., there is a regime where sub-ballistic solutions exist \cite{Dondl_17c}. If comparison holds, the levelset solution cannot pass the zero levelset of the stationary supersolution and hence it becomes pinned.

The main result of this article is the following theorem whose proof can be found in sections \ref{sec:lower_bound} and \ref{sec:upper_bound}.
\begin{theorem}\label{thm:existence_of_subsupersol}
	If $\dens > 0$ and $\theta > 0$ are small enough, then almost surely equation \eqref{eq:main_levelset} admits
	\begin{enumerate}
		\item a stationary viscosity supersolution with linear growth whose zero level-set stays above $\{y=0\}$, if $F < c \sqrt{\dens} + \sup \mathcal{F}(0)$,
		\item a ballistic viscosity subsolution with linear growth whose initial zero level-set is $\{y=0\}$, if $F > C \sqrt{\dens} + \sup \mathcal{F}(0)$.
	\end{enumerate}
	The constants $c, C$ are deterministic constants independent of $\dens$.
\end{theorem}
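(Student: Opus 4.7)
The plan is to construct, under the stated conditions on $F$ and $\dens$, explicit level-set functions $u$ serving as almost sure viscosity super- and subsolutions of \eqref{eq:main_levelset}. In both cases the natural length scale emerging from the Poisson statistics (Remark~\ref{rk:mean_site_distance}) is $L \sim 1/\sqrt{\dens}$: this is simultaneously the typical obstacle spacing and the radius of a circular arc producing curvature of order $\sqrt{\dens}$. Matching these two scales is the line-tension mechanism responsible for Taylor scaling, and the technical work consists in turning the heuristic into a rigorous viscosity statement through a probabilistic geometric construction.

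For part~(1), I would fix a coarse scale $L = c_\ast / \sqrt{\dens}$ and consider the tiles $Q_k = [kL, (k+1)L] \times [-L, L]$ for $k \in \Z$. Each tile contains an obstacle centre with some probability $p = p(c_\ast) \in (0,1)$, and these events are independent across tiles by the defining property of the Poisson process. A block-renormalisation argument---coupling to a Bernoulli site process at a slightly coarser scale so as to sit above the one-dimensional percolation threshold, combined with Borel--Cantelli to exclude arbitrarily long empty stretches---produces almost surely a bi-infinite sequence $\{p_k\}_{k\in\Z}$ of selected obstacles with $x_{k+1}-x_k$ and $|y_k|$ both of order $1/\sqrt{\dens}$. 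The supersolution's zero level set $\Gamma_0$ is then built by $C^1$ concatenation of circular arcs of radius $R \sim 1/\sqrt{\dens}$ joining consecutive $p_k$ and bulging downward between them; $u$ is taken to be a truncated, linearly extended signed distance function. The stationary supersolution inequality reduces to $\kappa + F - \sup \mathcal{F}(0) \le \varphi\,\eta_\theta(u/|\nabla u|)$: away from obstacles $\varphi \equiv 0$ and one needs $\kappa \le \sup \mathcal{F}(0) - F$, which holds provided $F - \sup \mathcal{F}(0) < c\sqrt{\dens}$ with $c$ controlled by the chosen arc radius; near an obstacle the positivity of $\varphi$ absorbs any excess; and at the corners where two arcs meet the geometry is concave from above, so no smooth test function can touch from below with larger curvature.

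For part~(2), the natural ansatz is a travelling graph whose zero level set has the form $y = c_\ast t + w(x)$, moving upward at speed $c_\ast > 0$ with $w$ a Lipschitz correction. The graph is routed---again using the Poisson statistics---so that its trace only grazes each obstacle disc at isolated points spaced $O(1/\sqrt{\dens})$ apart, with slope and curvature of $w$ of order $1$ and $\sqrt{\dens}$ respectively. Substituting into \eqref{eq:main_levelset} and using monotonicity of $\mathcal{F}$ near zero, the subsolution inequality holds provided $F - \sup \mathcal{F}(0)$ exceeds the sum of the curvature cost and the worst-case friction residual along the corridor, both of which are $O(\sqrt{\dens})$; this gives $F > C\sqrt{\dens} + \sup \mathcal{F}(0)$. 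The main technical obstacle in both parts is the almost sure existence of the required geometric object---a blocking chain of obstacles in (1), a crossing corridor in (2)---at precisely the scale $1/\sqrt{\dens}$: naive per-tile estimates only give positive probability, and one must renormalise at a coarser scale or apply Borel--Cantelli to upgrade this to almost sure existence while preserving the $1/\sqrt{\dens}$ scaling. A secondary, milder difficulty is checking the viscosity inequality at the piecewise smooth joins, which is handled by a standard touching-from-one-side argument.
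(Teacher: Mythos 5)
Your high-level picture (circular arcs of radius $\sim \dens^{-1/2}$, a percolation-type argument to find obstacle chains or obstacle-free corridors, matching the curvature cost to the force budget) is the right one, and it matches the paper's strategy. However, both probabilistic ingredients as you have formulated them are wrong, and each gap is exactly where the paper's actual mechanism lies.

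For part (1), the claim that Borel--Cantelli ``excludes arbitrarily long empty stretches'' in a one-dimensional array of tiles of side $L\sim 1/\sqrt\dens$ is false: at this scale each tile is non-empty with a probability bounded strictly away from $1$, and by the second Borel--Cantelli lemma there \emph{are} almost surely arbitrarily long runs of empty tiles, so no bi-infinite pinning chain with $|y_k| = O(1/\sqrt\dens)$ exists. Also, one-dimensional site percolation has $p_c=1$, so there is no ``sitting above the threshold'' available. The paper resolves this by replacing the $1$-row tiling with a genuinely two-dimensional percolation statement: Lipschitz percolation (Dirr--Dondl--Grimmett--Holroyd--Scheutzow) produces a random $1$-Lipschitz function $L:\Z\to\N$ such that column $k$ contains a selected obstacle at (rescaled) height $L(k)$. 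The chain is then guaranteed in \emph{every} column, but at unboundedly varying heights; the supersolution is the $C^1$ concatenation of the inside-obstacle caps (Lemma~\ref{lem:loc_inside_obstacle}) with connecting arcs (Lemma~\ref{lem:connect}), and the Lipschitz bound on $L$ is what gives linear growth rather than boundedness. This is the missing idea in your proposal.

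For part (2), a propagating subsolution routed to ``graze'' obstacles at isolated points does \emph{not} work: the friction cost at a grazing point is of order $\varphi\sim f$, a fixed constant depending on the obstacle strength, not $O(\sqrt\dens)$, so it cannot be absorbed into $F-\sup\mathcal F(0)$. The correct mechanism, used by the paper, is to find corridors of width $h\sim\dens^{-1/2}$ that \emph{avoid obstacles entirely} (Lemma~\ref{lem:paths_with_density}, via Holroyd--Martin stacked Lipschitz surfaces), so that $\varphi\equiv 0$ on the moving front and the only cost is curvature $2/h \sim \sqrt\dens$. Relatedly, your graphical ansatz $y = c_\ast t + w(x)$ is too rigid: the corridor takes lateral steps $p(k+1)-p(k)\in\{(\pm1,0),(0,1)\}$, so the propagating front cannot stay a graph over $x$; the paper's Lemma~\ref{lem:local_propagating_subsolution} builds the subsolution as a moving closed curve that passes through, caps, and reinitializes across cubes precisely to handle these turns.
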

\begin{proof}
	The $(1)$ part is proved in theorem \ref{thm:levelset_super} and the $(2)$ part in theorem \ref{thm:levelset_sub}.
\end{proof}

We immediately deduce the following.

\begin{corollary}[Scaling of the critical force] \label{cor:scaling}
	Assume that \eqref{eq:main_levelset} satisfies a comparison principle then it holds almost surely for an initially flat interface and for $\dens$ small enough that
	\[
	c \sqrt{\dens} + \sup \mathcal{F}(0) \le \overline{F}_{\rm crit} \le \underline{F}_{\rm crit} \le C \sqrt{\dens} + \sup \mathcal{F}(0),
	\]
	where $c, C$ are deterministic constants independent of $\dens$.
\end{corollary}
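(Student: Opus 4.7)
The proof is a short deduction from Theorem \ref{thm:existence_of_subsupersol} combined with the hypothesised comparison principle. The plan is, for each of the two scaling bounds, to feed the sub- or super-solution produced by that theorem into the comparison principle as a barrier for the actual viscosity solution $u$ of \eqref{eq:main_levelset} starting from a level-set representation of the initial flat interface $\{y=0\}$.

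For the lower bound $c\sqrt{\dens} + \sup \mathcal{F}(0) \le \overline{F}_{\rm crit}$, I would fix any $F < c\sqrt{\dens} + \sup \mathcal{F}(0)$ and let $U$ be the stationary supersolution from Theorem \ref{thm:existence_of_subsupersol}(1). Since $\{U=0\}$ lies above $\{y=0\}$ and $U$ has linear growth, one can choose an initial level-set representative $u_0$ of $\{y=0\}$ with $u_0 \le U$ pointwise (for instance a suitably truncated or rescaled signed distance to $\{y=0\}$, after matching signs according to the paper's convention). Comparison then forces $u(\cdot,t) \le U$ for all $t\ge 0$, so the interface $\{u(\cdot,t)=0\}$ cannot cross $\{U=0\}$ and therefore remains confined to a neighborhood of $\{y=0\}$. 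Hence the evolution is pinned, which gives $F\le\overline{F}_{\rm crit}$.

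For the upper bound, I would symmetrically fix $F > C\sqrt{\dens} + \sup \mathcal{F}(0)$ and use the ballistic subsolution $V$ from Theorem \ref{thm:existence_of_subsupersol}(2). Since $V(\cdot,0)$ has initial zero level-set $\{y=0\}$ and has linear growth, I can arrange $u_0 \ge V(\cdot,0)$ pointwise, so comparison yields $u(\cdot,t) \ge V(\cdot,t)$ for all $t\ge 0$. The ballistic motion of $\{V=0\}$ is then inherited by $\{u=0\}$, giving $F\ge \underline{F}_{\rm crit}$. The remaining inequality $\overline{F}_{\rm crit}\le \underline{F}_{\rm crit}$ is tautological, since at any $F$ strictly between these two values the unique solution starting from $\{y=0\}$ would have to be simultaneously pinned and ballistic.

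The only non-formal step is the barrier setup at $t=0$: I must present a single level-set function $u_0$ of the flat interface that is correctly ordered pointwise against the barrier on all of $\R^2$, not only in a neighborhood of the interface. This is where the ``linear growth'' clause in Theorem \ref{thm:existence_of_subsupersol} enters, preventing the barriers from degenerating at infinity and allowing one to choose an unbounded $u_0$ of the required sign. I do not expect this to pose any real obstacle; all the substance is contained in Theorem \ref{thm:existence_of_subsupersol}, and the corollary is essentially a translation of that theorem into a statement about $\overline{F}_{\rm crit}$ and $\underline{F}_{\rm crit}$.
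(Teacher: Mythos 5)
Your proposal is correct and takes essentially the same approach as the paper: apply the comparison principle to sandwich the unique viscosity solution between the stationary supersolution and the ballistic subsolution supplied by Theorem~\ref{thm:existence_of_subsupersol}, and read off pinning and ballisticity from the resulting ordering of zero level-sets. The paper's proof is a one-line appeal to comparison, while yours spells out the barrier setup at $t=0$; the needed pointwise ordering is provided in the paper by the explicit level-set forms (initial data $u_0(x,y)=-y$ and barriers of the shape $v(x)-y$), so the step you flag as the only non-formal one is indeed routine.
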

\begin{proof}
	The comparison principle allows to compare the viscosity sub- and supersolution from \eqref{thm:existence_of_subsupersol} with the unique solution. Therefore, we see that the bounds hold.
\end{proof}

Hence, we prove that the pinning to depinning transition scales like the inverse of the mean distance between nearest sites, see remark \ref{rk:mean_site_distance}. The proof is based on the construction of explicit sub- and supersolutions. Both constructions are novel though the construction for the lower bound is based on \cite{Dondl_11b}.

If $\mathcal{F} = \tau\partial R$, i.~e., the energy is dissipated only by dry friction kinetics, we no longer have unique or continuous solutions. As satisfying \eqref{eq:main_levelset} is a local condition, as soon as a discontinuous viscosity solution jumps, it can jump to almost any stationary curve satisfying the equation. In our given force field (noting its non-monotonicity), there are infinite many states for such stationary curves. We argue here that a physically reasonable solution for this purely rate-independent dissipation is the pointwise minimum of supersolutions, which is still a supersolution. We note that this minimal supersolution is also below the vanishing viscosity limit. We can thus still make the observation that any reasonable solution for the purely rate-independent case also follows the Taylor scaling law.

\begin{corollary}
	Let $\dens > 0, \theta > 0$ be small enough. If $F < c\sqrt{\dens} + \sup \mathcal{F}(0)$, then the zero level-set of the minimal supersolution $u_{\rm m}$, i.~e., the pointwise minimum of all supersolutions above the initial condition, of \eqref{eq:main_levelset} with $\mathcal{F} = \tau\partial R$  will almost surely remain bounded for all times.
\end{corollary}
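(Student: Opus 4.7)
The plan is to reduce the claim to Theorem~\ref{thm:existence_of_subsupersol}(1) by comparing $u_m$ against the stationary supersolution produced there. For $\mathcal{F} = \tau \partial R$ one has $\sup \mathcal{F}(0) = \tau$, so the standing hypothesis $F < c\sqrt{\dens} + \sup \mathcal{F}(0)$ is precisely the hypothesis of Theorem~\ref{thm:existence_of_subsupersol}(1). I would therefore begin by invoking that theorem to obtain, almost surely, a stationary viscosity supersolution $u_s$ of \eqref{eq:main_levelset} with linear growth whose zero level set stays above $\{y=0\}$. A small additional step is to record that the construction used to prove Theorem~\ref{thm:levelset_super} actually produces $u_s$ as (a graphical modification of) a linear-in-$y$ function, so that $\{u_s = 0\}$ is contained in a strip $\{0 \le y \le M\}$ for some random but almost surely finite height $M$, and $u_s \to -\infty$ as $y \to +\infty$.

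Next, let $u_0$ be any fixed level-set representative of the flat initial interface, chosen with the sign convention of \eqref{eq:main_levelset} so that $u_0 \le 0$ above $\{y=0\}$ and $u_0 \ge 0$ below. Because the zero level set of $u_s$ lies above $\{y=0\}$ and the sign conventions match, one has $u_s(\cdot,0) \ge u_0$, and hence the family
\[
S \coloneqq \{\, v : v \text{ is a viscosity supersolution of \eqref{eq:main_levelset} with } v(\cdot,0) \ge u_0 \,\}
\]
is non-empty. The pointwise infimum $u_m \coloneqq \inf_{v \in S} v$ is then well defined. The single non-trivial structural input, and the step I expect to be hardest, is the Perron-type stability statement asserted in the paragraph preceding the corollary: that this infimum is itself a viscosity supersolution of \eqref{eq:main_levelset}. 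In the set-valued framework of \cite{CourteDondl} this rests on the closure of the differential-inclusion inequality under taking lower-semicontinuous envelopes, which in turn follows from upper semicontinuity of $\mathcal{F}$ and $\partial R$; once this is in place, the rest is a standard Perron argument.

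Given $u_s \in S$, the definition of $u_m$ yields $u_m \le u_s$ everywhere. The conclusion is then immediate: at any time $t \ge 0$ and any point $(x,y)$ with $u_m(x,y,t) = 0$ one has $u_s(x,y) \ge u_m(x,y,t) = 0$, so $(x,y)$ lies at or below the stationary zero level set of $u_s$ and hence in $\{y \le M\}$. Therefore the zero level set of $u_m(\cdot,t)$ is trapped in the fixed horizontal strip $\{y \le M\}$ for every $t \ge 0$, which is precisely the almost sure boundedness for all times asserted in the corollary. The whole argument is driven by a single comparison with $u_s$; all of the real work is hidden in Theorem~\ref{thm:existence_of_subsupersol}(1) and in the Perron-type stability of set-valued supersolutions.
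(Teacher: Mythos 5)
Your argument is correct and is genuinely more direct than the paper's. The paper proves $u_m \le v$ (with $v$ the stationary supersolution from Theorem~\ref{thm:levelset_super}) by a vanishing viscosity detour: it introduces the regularized kinetic relation $\mathcal{F}_\epsilon(a)=\epsilon a + \tau\partial R(a)$, invokes the comparison principle of section~\ref{sec:comp} for $\epsilon>0$ to get $u_\epsilon \le v$, and then passes to the half-relaxed limit to conclude $u_m \le \liminf^*_\epsilon u_\epsilon \le v$. You instead observe that $v$ itself is a supersolution of \eqref{eq:main_levelset} for $\mathcal{F}=\tau\partial R$ (the construction in Theorem~\ref{thm:levelset_super} only uses a choice $\mu\in\mathcal{F}(0)$, so it applies verbatim), that $v(\cdot,0)\ge u_0$ because the zero level set stays above $\{y=0\}$, hence $v\in S$, and therefore $u_m=\inf_{w\in S}w\le v$ by sheer definition of the pointwise infimum. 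This bypasses the comparison principle, the regularization, and the half-relaxed limit entirely. What the paper's longer route buys is the additional statement advertised in the preamble to the corollary, namely that $u_m$ lies below the vanishing viscosity limit; for the boundedness conclusion alone your shortcut suffices. Two small points worth flagging: (i) you spend a paragraph on the Perron-type stability of the infimum being itself a supersolution, but your own final argument never uses that fact, so it can be dropped for this corollary; (ii) the phrase ``trapped in a fixed horizontal strip $\{y\le M\}$'' slightly overstates what is available, since the graph $v(x)$ from Theorem~\ref{thm:lower_bound} has linear growth and need not be uniformly bounded -- the correct conclusion, which matches the paper, is that the zero level set of $u_m(\cdot,t)$ stays on or below the fixed graph $\{y=v(x)\}$ for all $t$, which is still the pinning statement intended.
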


\begin{proof}
	Due to the stability of viscosity solutions, $u_{\rm m}$ will be smaller then the vanishing viscosity solution $\liminf^*_{\epsilon \to 0} u_{\epsilon}$ where $\liminf^*_{\epsilon\to 0}$ is the half-relaxed limit (see \cite[Section~2.3]{CourteDondl}) and $u_{\epsilon}$ the unique viscosity solution (see section \ref{sec:comp}) to
	\begin{equation*}
	\mathcal{F}_{\epsilon}(v_n(\xi)) + \varphi(\xi) \partial R(v_n(\xi)) \ni \kappa(\xi) + F, \text{ for all } \xi \in \Gamma(t),
	\end{equation*}
	with $\mathcal{F}_{\epsilon}(v) \coloneqq \epsilon v + \tau \partial R(v)$ for all $v \in \R$. Let $v$ be the stationary supersolution constructed in section \ref{sec:lower_bound}, then we have for all $\epsilon > 0$ that almost surely for all $x \in \R^3$ and $t \in [0, T]$ it holds
	\[
		u_{\epsilon}(x, t) \le v(x, t).
	\]
	Taking the half-relaxed limit, now implies that almost surely
	\[
		u_{\rm m}(x, t) \le {\liminf_{\epsilon \to 0}}^* u_{\epsilon}(x, t) \le v(x, t).
	\]
	Hence, $u_{\rm m}$ stays below $v$ and the zero level-set of $u_{\rm m}$ remains bounded.
\end{proof}

\begin{remark}
This corollary shows that the minimal supersolution remains pinned as long as $F < c\sqrt{\dens} + \sup \mathcal{F}(0)$. On the other hand, there exist no (bounded) supersolutions if $F > C\sqrt{\dens} + \sup \mathcal{F}(0)$ as this would contradict the existence of a propagating subsolution as constructed in section \ref{sec:upper_bound}. Hence, we conclude that if $F > C\sqrt{\dens} + \sup\mathcal{F}(0)$ the solution jumps instantly to $+\infty$. In this sense, Corollary \ref{cor:scaling} is still valid in the purely rate-independent setting.
\end{remark}

\section{Comparison and Existence}\label{sec:comp}

In this section we show that \eqref{eq:main_levelset} satisfies a comparison principle under the additional assumption that $\mathcal{F}$ has the following form
\begin{equation}\label{eq:dissipation}
\mathcal{F}(a) \coloneqq a + \tau\partial R(a),
\end{equation}
where $\tau \ge 0$, and $\partial R$ is the subdifferential of the absolute value. %

We will first show that comparison holds and then prove existence using Perron's method. Using \eqref{eq:dissipation} we can rewrite \eqref{eq:main_levelset} in the following way
\[
u_t - |\nabla u|\div(\tfrac{\nabla u}{|\nabla u|}) - |\nabla u| F \in - (\tau + \varphi(\cdot) \eta_{\theta}(\tfrac{u}{|\nabla u|}))|\nabla u| \partial R(u_t).
\]

After applying an exponential rescaling in time, i.~e., applying for some $\dens > 0$ the map $u \mapsto e^{-\dens t} u$, it is enough to show comparison for the following equation
\begin{equation}\label{eq:levelset_rescaled}
F(u, u_t, \nabla u, D^2 u) \in G(x, u, \nabla u) \mathcal{S}(u_t + \dens u),
\end{equation}
with $F(r, a, p, X) \coloneqq a + \dens r - \tr(X) + \tr(\tfrac{p\otimes p}{|p|^2}X) - |p|F$, $G(x, r, p) \coloneqq (\tau + \varphi(x) \eta_{\theta}(\tfrac{r}{|p|}))|p|$, and $\mathcal{S}(a) \coloneqq -\partial R(a)$. The definition of viscosity solutions to equations like \eqref{eq:levelset_rescaled} have been stated in \cite{Dondl_20c} and \cite{CourteDondl}. Recall, that a subsolution of \eqref{eq:levelset_rescaled} has to be a subsolution of \eqref{eq:levelset_rescaled} with $F$ replaced by its lower-semicontinuous envelope $F_*$. Analogously a supersolution has to be a supersolution while replacing $F$ by its upper-semicontinuous envelope $F^*$. Introducing $\operatorname{MC}(p, X) \coloneqq - \tr(X) + \tr(\tfrac{p\otimes p}{|p|^2}X)$, we can rewrite $F$ as $F(r, a, p, X) \coloneqq a + \dens r +\operatorname{MC}(p, X) - |p|F$ and note that the application of the semicontinuous envelopes to $F$ results on applying it on $\operatorname{MC}$.

The following lemma will be useful for proving the comparison principle.
\begin{lemma}\label{lem:estimates}
	Let $\hat x, \hat y\in \R^2$,$a, b, u, v \in \R$, $p, q_1, q_2 \in \R^2$, $X, Y, A \in \operatorname{Sym}(2)$ with $a-b\ge \frac{\gamma}{T^2}$, then it holds
	\begin{align*}
	F^*&(v, b, p+q_1, Y-A) - F_*(u, a, p+q_2, X+A) \\ &
	\le -\tfrac{\gamma}{T^2} + \dens (v-u) + \operatorname{MC}^*(p+q_1, Y-A) - \operatorname{MC}_*(p+q_2, X+A) + |q_1-q_2|F,
	\end{align*}
	and
	\begin{align*}
	|G&(\hat y, v, p + q_1) - G(\hat x, u, p + q_2)| \\
	&\le \tau |q_1-q_2| + L_{\varphi} |p+q_1||\hat x - \hat y| +||\varphi||_\infty \left|\eta_{\theta}(\tfrac{v}{|p+q_1|})|p+q_1| - \eta_{\theta}(\tfrac{u}{|p+q_2|})|p+q_2|\right|,
	\end{align*}
	where $L_\varphi$ is the Lipschitz-constant of $\varphi$.
\end{lemma}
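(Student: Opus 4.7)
Both inequalities should follow by direct algebraic expansion; the lemma's role is really just to \emph{expose} the ``nice'' pieces that will be absorbed in the doubling‑of‑variables step of the comparison proof, so I would present it as a bookkeeping identity rather than try to estimate anything further.

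\emph{First inequality.} I would start from the decomposition $F(r,a,p,X) = a + \dens r + \operatorname{MC}(p,X) - |p|F$. Only $\operatorname{MC}$ can be discontinuous (at $p=0$), so $F^*$ and $F_*$ act only on that summand while the remaining pieces pass through unchanged. Writing the difference out gives
\begin{align*}
F^*(v,b,p+q_1,Y-A) - F_*(u,a,p+q_2,X+A) &= (b-a) + \dens(v-u) \\
&\quad + \operatorname{MC}^*(p+q_1,Y-A) - \operatorname{MC}_*(p+q_2,X+A) \\
&\quad - F\bigl(|p+q_1| - |p+q_2|\bigr).
\end{align*}
The hypothesis $a-b\ge\gamma/T^2$ lets me replace $(b-a)$ by $-\gamma/T^2$, and the reverse triangle inequality $\bigl||p+q_1|-|p+q_2|\bigr|\le|q_1-q_2|$ bounds the last term above by $F|q_1-q_2|$. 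Collecting yields the first claim.

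\emph{Second inequality.} I would split $G(x,r,p) = \tau|p| + \varphi(x)\eta_\theta(r/|p|)|p|$ into its $\tau$-piece and its $\varphi$-piece. The difference of the $\tau$-pieces is $\tau(|p+q_1|-|p+q_2|)$, again bounded in absolute value by $\tau|q_1-q_2|$. For the $\varphi$-pieces I would do the standard add/subtract, inserting $\varphi(\hat x)\eta_\theta(v/|p+q_1|)|p+q_1|$ to obtain
\[
\bigl(\varphi(\hat y)-\varphi(\hat x)\bigr)\eta_\theta\bigl(\tfrac{v}{|p+q_1|}\bigr)|p+q_1| + \varphi(\hat x)\Bigl(\eta_\theta\bigl(\tfrac{v}{|p+q_1|}\bigr)|p+q_1| - \eta_\theta\bigl(\tfrac{u}{|p+q_2|}\bigr)|p+q_2|\Bigr).
\]
Since $\varphi$ is $L_\varphi$-Lipschitz and $\eta_\theta\le\eta_\theta(0)=1$ by the assumptions on $\eta_\theta$, the first summand is bounded by $L_\varphi|\hat x-\hat y||p+q_1|$; the second is bounded by $\|\varphi\|_\infty$ times the $\eta_\theta$-difference kept in the statement. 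The outer triangle inequality then closes the estimate.

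I do not anticipate any genuine obstacle: the entire lemma is elementary manipulation. The one point to be careful about is to \emph{not} simplify further either the envelope difference $\operatorname{MC}^* - \operatorname{MC}_*$ or the $\eta_\theta$‑difference, since those are precisely the quantities that will be attacked later via the Crandall--Ishii matrix inequality and via the smoothness of the mollified cut‑off, respectively. Keeping them visible in the conclusion is exactly what the lemma is designed to do.
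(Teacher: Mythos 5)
Your proof is correct, and it is exactly the "direct computation" the paper indicates: both inequalities follow from the additive decomposition of $F$ and $G$, the reverse triangle inequality $\bigl||p+q_1|-|p+q_2|\bigr|\le|q_1-q_2|$, the Lipschitz bound on $\varphi$, and the bound $\eta_\theta\le 1$. No discrepancy to report.
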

\begin{proof}
	These inequalities follow by direct computation.
\end{proof}

\begin{theorem}[Comparison Principle]
	Let $u$ be a subsolution to \eqref{eq:levelset_rescaled} and $v$ a supersolution to \eqref{eq:levelset_rescaled} with
	\begin{equation}\label{eq:linear_growth}
	u(x, t) - v(y, t) \le L (1+|x|+|y|) \text{ for all } (x, y, t) \in \R^2\times \R^2 \times (0, T)
	\end{equation}
	for some constant $L \ge 0$ and some final time $T < \infty$. If $u(\cdot, 0) \le v(\cdot, 0)$ and $\dens > 0$ is big enough then
	\[
	u(x, t) \le v(x, t) \text{ for all } (x, t) \in \R^2 \times [0, T).
	\]
\end{theorem}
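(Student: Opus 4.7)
The plan is to run the standard doubling-of-variables argument for viscosity solutions, adapted to the partial differential inclusion \eqref{eq:levelset_rescaled} and to the linear-growth setting \eqref{eq:linear_growth}. Arguing by contradiction, suppose
\[
M := \sup_{(x,t) \in \R^2 \times [0,T)} \bigl(u(x,t) - v(x,t)\bigr) > 0,
\]
and consider, for small parameters $\epsilon, \delta, \alpha, \gamma > 0$, the auxiliary function
\[
\Phi(x, y, t, s) := u(x,t) - v(y,s) - \tfrac{|x-y|^2}{\epsilon} - \tfrac{(t-s)^2}{\delta} - \alpha(|x|^2+|y|^2) - \tfrac{\gamma}{T-t}.
\]
The linear-growth bound forces $\Phi$ to attain a global maximum at some point $(\hat x, \hat y, \hat t, \hat s)$, interior in time for $\gamma$ small, with the standard penalization estimates $\alpha(|\hat x|+|\hat y|) \to 0$, $|\hat x-\hat y|^2/\epsilon \to 0$, $(\hat t-\hat s)^2/\delta \to 0$, and $\Phi \to M$ in the appropriate iterated limit. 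The initial condition $u(\cdot,0) \le v(\cdot,0)$ keeps the maximum off the parabolic boundary at $t = 0$, while the $\gamma/(T-t)$ penalty prevents it from escaping to $t = T$.

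Next, I would invoke the parabolic Crandall--Ishii--Jensen lemma to extract semijets $(a, p+q_2, X+A) \in \overline{\mathcal{P}}^{2,+}u(\hat x, \hat t)$ and $(b, p+q_1, Y-A) \in \overline{\mathcal{P}}^{2,-}v(\hat y, \hat s)$, where $p = 2(\hat x-\hat y)/\epsilon$, $q_2 = 2\alpha\hat x$, $q_1 = -2\alpha\hat y$, $a - b \ge \gamma/T^2$, and $X, Y$ obey the standard matrix inequality in terms of $\epsilon$. Since $u$ is a subsolution and $v$ a supersolution, the viscosity formulation for the inclusion yields $s_2 \in \mathcal{S}(a+\dens u(\hat x, \hat t))$ and $s_1 \in \mathcal{S}(b+\dens v(\hat y, \hat s))$, both in $[-1,1]$, satisfying
\[
F_*(u(\hat x,\hat t), a, p+q_2, X+A) \le G(\hat x, u(\hat x,\hat t), p+q_2)\,s_2,
\]
\[
F^*(v(\hat y,\hat s), b, p+q_1, Y-A) \ge G(\hat y, v(\hat y,\hat s), p+q_1)\,s_1.
\]

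Subtracting the two inequalities and applying Lemma \ref{lem:estimates} gives
\[
G(\hat y, v, p+q_1)\,s_1 - G(\hat x, u, p+q_2)\,s_2 \le -\tfrac{\gamma}{T^2} + \dens\bigl(v(\hat y,\hat s)-u(\hat x,\hat t)\bigr) + \bigl(\operatorname{MC}^* - \operatorname{MC}_*\bigr) + |q_1-q_2|F,
\]
while the left-hand side $G$-difference is controlled, again via Lemma \ref{lem:estimates}, by $\tau|q_1-q_2| + L_\varphi|p+q_1||\hat x - \hat y| + \|\varphi\|_\infty$ times a Lipschitz-small $\eta_\theta$-term. In the iterated limit the mean-curvature bracket is absorbed through the matrix inequality on $X-Y$ combined with $|\hat x-\hat y|^2/\epsilon \to 0$, all $q_i$-dependent corrections vanish, and $v(\hat y,\hat s) - u(\hat x,\hat t) \to -M < 0$. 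Choosing $\dens$ sufficiently large makes $\dens(v-u)$ dominate the bounded remainder, so that together with the negative $-\gamma/T^2$ penalty we obtain a strict contradiction.

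The step I expect to be the main obstacle is handling the singularity of the mean-curvature operator at $p+q_i = 0$, i.e., when $\hat x = \hat y$ (which is possible at the degenerate extremum): there $F$ differs from its semicontinuous envelopes $F_*, F^*$ and the classical matrix-inequality control of $\operatorname{MC}^*-\operatorname{MC}_*$ degenerates. The standard remedy I would adapt is to observe that at such extrema, by further regularizing the test function (for instance replacing $|x-y|^2/\epsilon$ by $|x-y|^4/\epsilon^2$), one may force $X = Y = 0$ in the limit, so that $\operatorname{MC}^* - \operatorname{MC}_* = 0$ there. A secondary subtlety is that the factor $\eta_\theta(u/|\nabla u|)$ in $G$ couples the zeroth-order value with the direction of the gradient; but the smoothness and compact support of $\eta_\theta$ together with the convergence $u(\hat x,\hat t) - v(\hat y,\hat s) \to M$ and $|\hat x-\hat y| \to 0$ reduce this to a perturbation that is absorbed in the large-$\dens$ argument.
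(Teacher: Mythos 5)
Your overall strategy -- doubling variables with quadratic and localization penalties, applying the parabolic Jensen--Ishii lemma, subtracting the sub- and supersolution inequalities, and using Lemma \ref{lem:estimates} together with a large $\dens$ to close the contradiction -- is the same general route the paper takes. Two of the subtleties you flag (degeneracy of the curvature operator at $p=0$, and the $\eta_\theta$ coupling) are indeed the ones the paper handles. But there is a genuine gap before any of that machinery is even reached.

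You set $M := \sup_{(x,t)} (u-v)(x,t)$ and then argue that the penalized maxima converge to $M$ with $\alpha(|\hat x|+|\hat y|)\to 0$, $|\hat x-\hat y|^2/\epsilon\to 0$, etc. All of this tacitly assumes $M<\infty$ and that near-optimizers of $u-v$ stay in a bounded region. But the hypothesis \eqref{eq:linear_growth} only controls \emph{growth}: taking $x=y$ it gives $u(x,t)-v(x,t)\le L(1+2|x|)$, which permits $M=+\infty$ and permits the near-supremum to escape to infinity. In that case $\Phi$ need not converge to $M$, the localization term $\alpha(|\hat x|^2+|\hat y|^2)$ does not vanish in the iterated limit, and the contradiction collapses. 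The paper's proof is organized precisely to fix this: it has a preliminary \emph{Step 1} (a growth estimate in the style of Crandall--Ishii--Lions, Theorem 5.1, using the radial cutoffs $\beta_R$) showing that $u(x,t)-v(y,t)-|x-y|-\gamma/(T-t)\le 1$ uniformly. This step yields both $M<\infty$ and the uniform bounds on the penalization terms ($\alpha|\hat x-\hat y|^4$ and $\epsilon(|\hat x|^2+|\hat y|^2)$ bounded) that make the rest of the argument work. Your proposal omits this step entirely, and with only linear growth assumed, there is no way around it.

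A secondary point: you correctly anticipate that a quartic penalty resolves the $p=0$ degeneracy (the paper uses $\alpha|x-y|^4$ from the start, so that $\hat x=\hat y$ forces $Z=0$ and hence $X=Y=0$ in the Jensen--Ishii inequality). But with the quartic penalty the Lipschitz remainder $L_\varphi|p||\hat x-\hat y|$ equals exactly $4\alpha|\hat x-\hat y|^4$, and it is \emph{not} immediate that this vanishes in the iterated limit $\epsilon\to 0$, $\gamma\to 0$, $\alpha\to\infty$, because the maximizers depend on all three parameters and the limits do not interchange freely. The paper devotes a separate Giga-style argument (equation \eqref{eq:fourth_power}, following \cite{Giga1991}) to establish $\liminf_{\alpha\to\infty}\liminf_{\gamma\to 0}\liminf_{\epsilon\to 0}\alpha|\hat x-\hat y|^4=0$; your sketch asserts this convergence without justification. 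Finally, the paper does not double the time variable; doing so as you propose is harmless but unnecessary here.
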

\begin{proof}
	We begin the proof with a growth estimate.\\
	\textit{Step 1: Growth estimate} \\
	We show that the difference between $u$ and $v$ grows in the following way, there is a constant $C=C(L_{\eta_{\theta}}, L_\varphi, ||\varphi||_\infty, F, \tau) > 0$ that depends on the Lipschitz-constants of $\varphi$ and $\eta_{\theta}$, the norm of $\varphi$, $F$, and $\tau$ such that for $\dens > C$ we obtain
	\[
	\sup_{(x, y, t) \in \R^2 \times \R^2 \times (0, T)} u(x, t) - v(y, t) -|x-y| - \tfrac{\gamma}{T-t} < 1,
	\]
	for some constant $\gamma > 0$. Following \cite[Theorem~5.1.]{Crandall1992}, for $R \ge 1$ we define radially symmetric functions $\beta_R(x) \coloneqq \tilde{\beta}_R(|x|)$ with $\tilde{\beta}\in C^\infty([0, \infty))$ and
	\begin{equation}\label{eq:beta}
	\tilde{\beta}_R(t) = 0, \text{ if } t \le R, \text{ and } \tilde{\beta}_R(t) = 2L(t-2R)+1, \text{ if } t \ge 2R,
	\end{equation}
	and $\tilde{\beta}_R$ has to satisfy  $|D\tilde{\beta}_R(x)| + |D^2\tilde{\beta}_R(x)| \le C$ for all $t \in (0, \infty)$ where $C$ is a constant independent of $R$. Moreover, let us define
	\[
	\Phi(x, y, t) \coloneqq u(x, t) - v(y, t) - (1+|x-y|^2)^{\frac{1}{2}} - (\beta_R(x) + \beta_R(y)) - \tfrac{\gamma}{T-t}
	\]
	Due to \eqref{eq:linear_growth} and \eqref{eq:beta}, it holds that $|x|,|y|\to\infty$ or $t \to T$, then $\Phi(x, y, t) \to -\infty$. Indeed, we can compute for $(x, y, t) \in \R^2\times \R^2 \times (0, T)$ with $|x|, |y| \ge 2R$ that
	\begin{align*}
	\Phi(x, y, t) \le L(1+|x|+|y|) -2L(|x|+|y|) - \tfrac{\gamma}{T-t} \le L - |x| - |y| -\tfrac{\gamma}{T-t}.
	\end{align*}
	Hence, there is a triple $(\hat x, \hat y, \hat t)$ at which $\Phi$ assumes its maximum. Now, if the growth estimate fails, then for $R$ big enough, we have $\Phi(\hat x, \hat y, \hat t) > 0$. We will show that even when $\Phi(\hat x, \hat y, \hat t) > 0$ the growth estimate holds. Note, that this condition on $\Phi$ gives us a lower estimate on the difference of $u$ and $v$, i.~e.,
	\[
	|\hat x-\hat y| + \tfrac{\gamma}{T-\hat t} \le u(\hat x, \hat t)- v(\hat y, \hat t).
	\]
	Furthermore, as $\hat{t}$ cannot be zero by the boundary condition, we can apply the Jensen-Ishii-Lemma \cite{Crandall1992}. Moreover, we will write $u$ for $u(\hat x, \hat t)$ and $v$ for $v(\hat y, \hat t)$ to simplify notation. Due to the Jensen-Ishii-Lemma there are
	\begin{align*}
	(a, p + D\beta_R(\hat{x}), X + D^2\beta_R(\hat{x})) &\in \mathcal{P}^{2, +} u(\hat{x}, \hat{t}),\\
	(b, p - D\beta_R(\hat{y}), -X - D^2\beta_R(\hat{y})) &\in \mathcal{P}^{2, -} v(\hat{y}, \hat{t}),
	\end{align*}
	with $a = b+ \frac{\gamma}{(T-\hat{t})^2}$, $p = \frac{\hat{x}-\hat{y}}{1+ |\hat{x}-\hat{y}|^2}$, and $$X = \frac{1}{1+|\hat{x}-\hat{y}|^2} \operatorname{Id} - 2 \frac{\hat{x}-\hat{y}}{1+|\hat{x}-\hat{y}|^2} \otimes \frac{\hat{x}-\hat{y}}{1+|\hat{x}-\hat{y}|^2}.$$
	This implies that one can find $\mu \in \mathcal{S}(a + \dens u)$, and $\nu \in \mathcal{S}(b + \dens v)$ such that
	\begin{align}
	F_*(u, a, p + D\beta_R(\hat{x}), X + D^2\beta_R(\hat{x})) &\le \mu G(\hat{x}, u, p + D\beta_R(\hat{x})) \label{eq:comp_rn_subsol}, \\
	F^*(v, b, p - D\beta_R(\hat{y}), -X - D^2\beta_R(\hat{y})) &\ge \nu G(\hat y, v, p - D\beta_R(\hat{y})) \label{eq:comp_rn_supersol}.
	\end{align}
	Subtracting \eqref{eq:comp_rn_subsol} from \eqref{eq:comp_rn_supersol} shows that
	\begin{align*}
	0 \le&~F^*(v, b, p - D\beta_R(\hat{y}), -X - D^2\beta_R(\hat{y}))
	- F_*(u, a, p + D\beta_R(\hat{x}), X + D^2\beta_R(\hat{x})) \\
	&- \nu\left( G(\hat y, v, p - D\beta_R(\hat{y}))
	- G(\hat x, u, p + D\beta_R(\hat{x}))\right)
	+ (\mu - \nu) G(\hat x, u, p + D\beta_R(\hat{x})).
	\end{align*}
	As $G\ge 0$ and $\mu-\nu \le 0$, we can estimate the last term by zero. Moreover, we can apply lemma \ref{lem:estimates} and obtain
	\begin{align*}
	0 \le &- \tfrac{\gamma}{T^2} + \dens(v-u) \\
	&+ \operatorname{MC}^*(p - D\beta_R(\hat{y}), -X - D^2\beta_R(\hat{y})) - \operatorname{MC}_*(p + D\beta_R(\hat{x}), X + D^2\beta_R(\hat{x})) \\
	&+ (\tau + F)|D\beta_R(\hat{x})+D\beta_R(\hat y)|
	+ L_{\varphi}|p+D\beta_R(\hat{x})| |\hat x - \hat y| \\
	&+||\varphi||_\infty  \left|\eta_{\theta}(\tfrac{v}{|p-D\beta_R(\hat{y})|})|p-D\beta_R(\hat{y})| - \eta_{\theta}(\tfrac{u}{|p+D\beta_R(\hat{x})|})|p+D\beta_R(\hat{x})|\right|
	\end{align*}
	As $p, X, D\beta_R, D^2\beta_R$ are bounded independently of $R$, we find a constant $C > 0$ depending on all data but $\dens$ such that (the last term is bounded by $||\varphi||_\infty \max\{|p+D\beta_R(\hat x)|, |p-D\beta_R(\hat y)|\}$)
	\begin{align*}
	0 &\le \dens (v-u) + C + C \left|v - u\right| + C |\hat x - \hat y| \\
	&\le \dens (v-u) + C + C \left|v - u\right| + C (u-v- \tfrac{\gamma}{T-\hat t})
	\end{align*}
	Hence, we obtain
	\[
	(\dens - 2C)(u-v) \le C
	\]
	and by choosing $\dens > 3C$ we see that $u-v$ is bounded independently of $R\ge 1$ by $1$.
	
	We conclude that for any $(x, y, t) \in \R^2 \times \R^2 \times (0, T)$ we have
	\[
	\Phi(x, y, t) \le \Phi(\hat x, \hat y, \hat t) \le 1,
	\]
	which shows by sending $R\to \infty$, the growth estimate.
	\[
	u(x, t) - v(x, t) - |x-y|-\tfrac{\gamma}{T-t} \le \Phi(x, y, t) \le 1.
	\]
	\textit{Step 2: Comparison Principle}\\
	Assume that $u$ will not stay below $v$, then there is some $\delta > 0$ with
	\begin{equation}\label{eq:assump}
	\sup_{(x, t) \in \R^2 \times (0, T)} u(x, t) - v(x, t) = \delta.
	\end{equation}
	Again, we use the variable doubling technique and introduce the following quantity,
	\[
	M_{\alpha, \gamma, \epsilon} \coloneqq \sup_{(x, y, t) \in \R^2 \times \R^2 \times (0, T)} u(x, t) - v(y, t) - \alpha |x-y|^4 - \epsilon (|x|^2 + |y|^2) - \tfrac{\gamma}{T-t}
	\]
	Due to the growth estimate $M_{\alpha, \gamma, \epsilon}$ is uniformly bounded and it is easy to see that if $\gamma, \epsilon$ are small enough then $M_{\alpha, \gamma, \epsilon}\ge \tfrac{\delta}{2}$. Again using the growth estimate, we obtain uniform bounds on $\alpha |x-y|^4$ and $\epsilon(|x|^2+|y|^2)$, i.~e.,
	\[
	\alpha |x-y|^4 + \epsilon(|x|^2+|y|^2) \le u(x, t) - v(y, t) - \tfrac{\gamma}{T-t} \le |x-y| + 1.
	\]
	Using Young's inequality, $|x-y| \le \frac{3}{4}\alpha^{-\frac{1}{3}} + \frac{\alpha}{4} |x-y|^4$, we see that
	\begin{equation}\label{eq:bound_on_diff_x}
	\alpha |x-y|^4 + \epsilon(|x|^2+|y|^2) \le 2
	\end{equation}
	for $\alpha$ bigger than some geometric quantity. This proves that the maximum of $M_{\alpha, \gamma, \epsilon}$ is achieved in a compact subset of $\R^2\times\R^2\times[0, T)$ at some triple $(\hat x, \hat y, \hat t)$. Again, the initial condition reveals that $\hat t$ cannot be zero and we can again apply the Jensen-Ishii-Lemma and obtain
	\begin{align*}
	(a, p+2\epsilon \hat{x}, X + 2\epsilon\operatorname{Id}) &\in \mathcal{P}^{2, +} u(\hat{x}, \hat{t}), \\
	(b, p-2\epsilon \hat{y}, Y-2\epsilon\operatorname{Id}) &\in \mathcal{P}^{2, -} v(\hat{y}, \hat{t}) ,
	\end{align*}
	with $a-b = \tfrac{\gamma}{(T-\hat{t})^2}$, $p \coloneqq 4\alpha |\hat x-\hat y|^2 (\hat x-\hat y)$, and
	\[
	-4||Z|| \left(\begin{array}{cc}\operatorname{Id} & 0 \\ 0 & \operatorname{Id}\end{array}\right) \le \left(\begin{array}{cc}X & 0 \\ 0 & Y\end{array}\right)\le \left(\begin{array}{cc}Z+\frac{1}{2||Z||}Z^2 & -(Z+\frac{1}{2||Z||}Z^2) \\ -(Z+\frac{1}{2||Z||}Z^2 ) & Z+\frac{1}{2||Z||}Z^2 \end{array}\right),\]
	with $Z \coloneqq 4 \alpha |\hat x-\hat y|^2 \operatorname{Id} + 8\alpha (\hat x-\hat y) \otimes (\hat x-\hat y)$. As $u$ is a subsolution and $v$ is a supersolution, we can find $\mu \in \mathcal{S}(a + \dens u)$ and $\nu \in \mathcal{S}(b + \dens v)$ such that
	\begin{align}
	F_*(u, a, p+2\epsilon\hat x, X+ 2\epsilon\operatorname{Id}) - \mu G(\hat x, u, p + 2\epsilon\hat x) \le 0,\label{eq:comp_rn_proof_sub} \\
	F^*(v, b, p - 2\epsilon\hat y, Y- 2\epsilon\operatorname{Id}) - \nu G(\hat y, v, p- 2\epsilon\hat y) \ge 0,\label{eq:comp_rn_proof_super}.
	\end{align}
	By subtracting \eqref{eq:comp_rn_proof_sub} from \eqref{eq:comp_rn_proof_super}, we obtain
	\begin{align*}
	0 \le&~ F^*(v, b, p - 2\epsilon \hat y, Y- 2\epsilon\operatorname{Id}) - F_*(u, a, p + 2\epsilon \hat x, X + 2\epsilon\operatorname{Id}) \\
	&- \nu\left( G(\hat y, v, p -  2\epsilon \hat{y})
	- G(\hat x, u, p + 2\epsilon \hat{x})\right)
	+ (\mu - \nu) G(\hat x, u, p +2\epsilon \hat{x}).
	\end{align*}
	Again, the last term can be estimated from above by zero and we apply lemma \ref{lem:estimates} to the first two differences. This leads to
	\begin{equation}\label{eq:inequality}
	\begin{aligned}
	0\le&~-\tfrac{\gamma}{T^2}+\dens(v-u)+ \operatorname{MC}^*(p - 2\epsilon \hat y, Y) - \operatorname{MC}_*(p + 2\epsilon \hat x, X) + 2\epsilon |\hat x + \hat y|F\\
	&+ 2\tau\epsilon|\hat x+\hat y|+ L_\varphi |p+2\epsilon\hat{x}| |\hat x-\hat y| \\
	&+ ||\varphi||_\infty \left|\eta_{\theta}(\tfrac{v}{|p-2\epsilon\hat{y}|})|p-2\epsilon\hat{y}| - \eta_{\theta}(\tfrac{u}{|p+2\epsilon\hat{x}|})|p+2\epsilon\hat{x}|\right|.
	\end{aligned}
	\end{equation}
	In the next step, we want to take the limit inferior for $\epsilon \to 0$, note that in this case both $\epsilon \hat x, \epsilon \hat y \to 0$ and $|\hat x - \hat y|$ remains uniformly bounded independently of $\epsilon$. Hence, it remains to investiage the limit of the curvature terms and the last term. Hence, let us start with the following estimate
	\begin{align*}
	&\left|\eta_{\theta}(\tfrac{v}{|p-2\epsilon\hat{y}|})|p-2\epsilon\hat{y}| - \eta_{\theta}(\tfrac{u}{|p+2\epsilon\hat{y}|})|p+2\epsilon\hat{x}|\right| \\
	&~\le \left|\eta_{\theta}(\tfrac{v}{|p-2\epsilon\hat{y}|})|p-2\epsilon\hat{y}| - \eta_{\theta}(\tfrac{u}{|p-2\epsilon\hat{y}|})|p-2\epsilon\hat{y}|\right| \\
	&\hspace{2em}+ \left| \eta_{\theta}(\tfrac{u}{|p-2\epsilon\hat{y}|})|p-2\epsilon\hat{y}|
	-\eta_{\theta}(\tfrac{u}{|p+2\epsilon\hat{x}|})|p+2\epsilon\hat{x}|\right| \\
	&~\le L_{\eta_{\theta}} |u-v| + \left| \eta_{\theta}(\tfrac{u}{|p-2\epsilon\hat{y}|})|p-2\epsilon\hat{y}|
	-\eta_{\theta}(\tfrac{u}{|p+2\epsilon\hat{x}|})|p-2\epsilon\hat{y}|\right|\\
	&\hspace{2em}+\left|\eta_{\theta}(\tfrac{u}{|p+2\epsilon\hat{x}|})|p-2\epsilon\hat{y}|
	-\eta_{\theta}(\tfrac{u}{|p+2\epsilon\hat{x}|})|p+2\epsilon\hat{x}|\right| \\
	&~\le  L_{\eta_{\theta}} |u-v| + |p-2\epsilon\hat{y}|\left| \eta_{\theta}(\tfrac{u}{|p-2\epsilon\hat{y}|})
	-\eta_{\theta}(\tfrac{u}{|p+2\epsilon\hat{x}|})\right| + 2\epsilon|\hat x + \hat y|.
	\end{align*}
	Note, that the second term vanishes if $|u|\ge \theta \max\{|p-2\epsilon \hat y|, |p+2\epsilon \hat x|\}$. On the other hand if $u$ were smaller then this quantity, we can proceed with the following estimate
	\begin{align*}
	|p-2\epsilon\hat{y}|\left| \eta_{\theta}(\tfrac{u}{|p-2\epsilon\hat{y}|})
	-\eta_{\theta}(\tfrac{u}{|p+2\epsilon\hat{x}|})\right| &\le L_{\eta_\theta} |u| \left|1- \tfrac{ |p-2\epsilon\hat{y}|}{|p+2\epsilon\hat{x}|}\right| \\
	&\le L_{\eta_\theta}\theta \max\{|p-2\epsilon \hat y|, |p+2\epsilon \hat x|\} \left|1- \tfrac{ |p-2\epsilon\hat{y}|}{|p+2\epsilon\hat{x}|}\right|
	\end{align*}
	Note that this term vanishes for $\epsilon \to 0$ as $|p|$ remains bounded independently of $\epsilon$, hence we can conclude that
	\[
	\left|\eta_{\theta}(\tfrac{v}{|p-2\epsilon\hat{y}|})|p-2\epsilon\hat{y}| - \eta_{\theta}(\tfrac{u}{|p+2\epsilon\hat{y}|})|p+2\epsilon\hat{x}|\right| \le L_{\eta_\theta} |u-v| + {\rm o}(1) \text{ as } \epsilon \to 0
	\]
	Reintroducing this information in \eqref{eq:inequality}, we obtain for $\epsilon \to 0$ that
	\[
	0 \le -\tfrac{\gamma}{T^2}+(\dens-L_{\eta_\theta})(v-u)+ \operatorname{MC}^*(p - 2\epsilon \hat y, Y) - \operatorname{MC}_*(p + 2\epsilon \hat x, X)+ L_\varphi |p| |\hat x-\hat y| + {\rm o}(1).
	\]
	While taking the limit inferior for $\epsilon \to 0$, the difference of the curvature termes becomes negative, as either the limit point of $p$ is non-zero, which leaves us with $\operatorname{MC}(p, Y) - \operatorname{MC}(p, X)$ and then the degenerate ellipticity of the curvature operator applies. Moreover, if the $p$ were zero, then by definition $X=Y=0$ and the operator vanishes. Further choosing $\dens \ge L_{\eta_{\theta}} +2$, we obtain
	\[
	0 \le -\tfrac{\gamma}{T^2}-\delta+ \liminf_{\epsilon \to 0} L_\varphi |p| |\hat x-\hat y|.
	\]
	Note that $|p||\hat x-\hat y|$ is nothing but $4\alpha|\hat x-\hat y|^4$ and therefore we are done if we can prove that
	\begin{equation}\label{eq:fourth_power}
	\liminf_{\alpha \to \infty}\liminf_{\gamma\to 0}\liminf_{\epsilon \to 0} \alpha |\hat x - \hat y|^4 = 0,
	\end{equation}
	as this would lead to a contradiction.
	
	To see that equation \eqref{eq:fourth_power} is true, we follow \cite{Giga1991}. First, we define
	\[
	\delta(r) \coloneqq \sup_{(x, y, t) \in \R^2 \in \R^2 \times (0, T)} \{ u(x, t) - v(y, t) \;|\; |x-y|<r \}
	\]
	and set $\delta_0 \coloneqq \liminf_{r \to 0} \delta(r)$. Note that $\delta_0 \ge \delta > 0$. For any $r > 0$, take a maximizing sequence $(x^r_n, y^r_n, t^r_n)_{n\in \N} \subset \R^2 \times \R^2 \times (0, T)$ with $|x^r_n -y^r_n|<r$ for all $n$, i.~e., $u(x^r_n, t^r_n) - v(y^r_n, t^r_n) \to \delta(r)$ for $n\to \infty$. Then it holds,
	\[
	M_{\alpha, \gamma, \epsilon} \ge u(x^r_n, t^r_n) - v(y^r_n, t^r_n) - \alpha r^4 - \epsilon (|x^r_n|^2 + |y^r_n|^2) - \tfrac{\gamma}{T-t^r_n}
	\]
	Hence,
	\[
	\liminf_{\gamma \to 0} \liminf_{\epsilon \to 0} M_{\alpha, \gamma, \epsilon}\ge u(x^r_n, t^r_n) - v(y^r_n, t^r_n) - \alpha r^4.
	\]
	As the left handside is independent of $n$, we can pass to the limit revealing that $\liminf_{\gamma \to 0} \liminf_{\epsilon \to 0} M_{\alpha, \gamma, \epsilon} \ge \delta(r) - \alpha r^4$ and finally, we can send $r\to 0$ and see the following lower bound,
	\begin{equation}\label{eq:M_lower_bound}
	\liminf_{\gamma \to 0} \liminf_{\epsilon \to 0} M_{\alpha, \gamma, \epsilon} \ge \delta_0.
	\end{equation}
	On the other hand as $|\hat x - \hat y|\to 0$ for $\alpha \to \infty$, there exists some decreasing continuous function $\omega : [0, \infty) \to [0, \infty)$ with $|\hat x - \hat y| = \omega(\alpha^{-1})$ and therefore
	\begin{align*}
	M_{\alpha, \gamma, \epsilon} &= u(\hat x, \hat t) - v(\hat y, \hat t) - \alpha |\hat x - \hat y|^4 - \epsilon (|\hat x|^2 + |\hat y|^2) - \tfrac{\gamma}{T-\hat t} \\
	&\le \delta(\omega(\alpha^{-1})) - \alpha |\hat x - \hat y|^4.
	\end{align*}
	Combining this estimate with \eqref{eq:M_lower_bound}, we obtain
	\[
	\liminf_{\gamma \to 0}\liminf_{\epsilon \to 0} \alpha |\hat x - \hat y|^4 \le \delta(\omega(\alpha^{-1})) - \liminf_{r\to 0} \delta(r).
	\]
	As $\liminf_{\alpha \to \infty} \delta(\omega(\alpha^{-1})) = \liminf_{r\to 0} \delta(r)$, we can take the limit inferior as $\alpha \to \infty$ and see that \eqref{eq:fourth_power} holds.
\end{proof}

Recall, that we have just proven that \eqref{eq:main_levelset} satisfies a comparison principle. Due to Perron's method (see \cite{CourteDondl} for a proof of Perron's method for our type of equation), it is enough to show that there is a viscosity subsolution and a viscosity supersolution that satisfy the boundary condition in a strong sense in order to obtain a unique viscosity solution.

\begin{proposition}[Existence]
	For all $F, \theta > 0$, there exists a unique viscosity solution $u:\R^2 \times [0, +\infty) \to \R$ to \eqref{eq:main_levelset} with $\mathcal{F}$ as in \eqref{eq:dissipation} and it holds
	\[
	u(x, y, 0) = -y.
	\]
	Hence, this solution describes the evolution of an interface with initial zero level-set $\{ y = 0 \}$.
\end{proposition}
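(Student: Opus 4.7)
The plan is to apply Perron's method from \cite{CourteDondl}, for which it suffices to exhibit a viscosity subsolution and a viscosity supersolution that both agree with $-y$ at $t=0$, together with uniqueness via the comparison principle proved earlier in this section. The natural candidates are the affine barriers
\[
    u^-(x,y,t) \coloneqq -y - C t, \qquad u^+(x,y,t) \coloneqq -y + C t,
\]
with a constant $C$ to be chosen. Both are smooth with $\nabla u^\pm = (0,-1)$, $|\nabla u^\pm|=1$, $D^2 u^\pm = 0$, so the curvature term in \eqref{eq:main_levelset} vanishes identically and the semicontinuous envelopes that enter the definition of viscosity sub- and supersolutions make no difference here (no vanishing gradient issues and no degeneracy of $\eta_\theta$). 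Since $u^\pm_t = \pm C \ne 0$, the kinetic relation $\mathcal{F}(a)=a+\tau\partial R(a)$ and the friction multiplier $\partial R(u^\pm_t)$ are both single-valued at these test values. Moreover $u^-(\cdot,\cdot,0)=u^+(\cdot,\cdot,0)=-y$, so the initial condition is attained in a strong (continuous) sense by both barriers.

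Next I would verify the sub/supersolution properties pointwise. Plugging $u^+$ into \eqref{eq:main_levelset} with $C>0$ yields on the left-hand side the single value $C+\tau+\varphi(x,y)\,\eta_\theta(u^+)$ and on the right-hand side $F$; hence $u^+$ is a (classical, therefore viscosity) supersolution as soon as $C\ge F$, using $\tau\ge0$ and $\varphi\ge0$. Similarly, plugging in $u^-$ with $u^-_t=-C<0$ gives left-hand side $-C-\tau-\varphi(x,y)\,\eta_\theta(u^-)$, which is $\le F$ for every $C\ge 0$, so $u^-$ is a subsolution. Fix any such $C\ge F$.

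Then I would invoke Perron's method in the form developed in \cite{CourteDondl} for partial differential inclusions of this type: the function
\[
    u(x,y,t) \coloneqq \sup\{w(x,y,t) \,:\, w \text{ is a viscosity subsolution of \eqref{eq:main_levelset} with } u^-\le w \le u^+\}
\]
is a viscosity solution sandwiched between $u^-$ and $u^+$. Because $u^-(\cdot,\cdot,0)=u^+(\cdot,\cdot,0)=-y$, the squeeze forces $u(\cdot,\cdot,0)=-y$, i.e.\ the initial condition is attained strongly.

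For uniqueness, suppose $\tilde u$ is another viscosity solution with $\tilde u(\cdot,\cdot,0)=-y$. Both $u$ and $\tilde u$ remain trapped between $u^-$ and $u^+$ on any time slab $[0,T]$, so each has linear spatial growth uniformly in time, and the pairwise difference satisfies the estimate \eqref{eq:linear_growth} required by the comparison theorem. Applying that theorem in both directions (on every finite time interval $[0,T]$, with the exponential rescaling absorbed as in the beginning of this section) yields $u\equiv\tilde u$ on $\R^2\times[0,\infty)$.

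The one real subtlety, and the main thing to be careful about, is confirming that Perron's method from \cite{CourteDondl} genuinely covers our implicit, set-valued right-hand side with the nonmonotone multiplier $\eta_\theta(u/|\nabla u|)$; the smoothness and non-vanishing gradient of the barriers $u^\pm$ are precisely what make the boundary matching and the classical verification of the inclusion trivial, so all the difficulty is pushed into the abstract Perron construction, which is by reference.
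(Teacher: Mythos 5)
Your proof is correct and takes essentially the same route as the paper: affine barriers sandwiching the data $-y$, Perron's method from \cite{CourteDondl}, and the comparison principle on finite time slabs to get uniqueness and then glue to $[0,\infty)$. The only cosmetic difference is your lower barrier $-y-Ct$ versus the paper's stationary $-y$ (the latter is a subsolution just as well, choosing $-1\in\partial R(0)$), which buys you nothing except that $\partial R(u^-_t)$ is single-valued.
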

\begin{proof}
	Define $\underline{u}(x, y, t) \coloneqq -y$ and $\overline{u}(x, y, t) \coloneqq -y + \Lambda t$ with $\Lambda > 0$ big enough. Note that for each final time $T < \infty$ any sub- or supersolution that lies between $\underline{u}$ and $\overline{u}$ has the required growth \eqref{eq:linear_growth}, as
	\[
	|\underline{u}(x, y, t) - \overline{u}(x, y, t)| \le \Lambda T.
	\]
	Hence, for every $T > 0$ there is a unique viscosity solution to \eqref{eq:main_levelset} on $\R^2 \times [0, T)$. By uniqueness on each finite time interval, we can easily construct a unique continuous solution on $\R^2 \times [0, \infty)$.
\end{proof}

\section{Scaling Lower Bound}\label{sec:lower_bound}

Under the assumptions of theorem \ref{thm:existence_of_subsupersol} the existence of a stationary supersolution has already been proven in \cite{Dondl_11b}. However, the construction used in the article is not optimal and tracking all the constants leads to a lower bound that scales like $\dens$. We are improving on their construction, specifically, we are not gluing together flat solutions, but directly connect two solutions that lie inside obstacles with an arc of a circle. As in \cite{Dondl_11b} we have to track the derivative of the connecting circle arc in order to guarantee that the mean curvature at the point where we connect the functions remains a negative measure.

\begin{lemma}[Connecting with circles]\label{lem:connect}
	A point $(x_1, y_1) \in \R^2$ can be connected to a point $(x_2, y_2) \in \R^2$ with a function $u$ that parametrises an arc of a circle of (negative) curvature $\kappa \ge 0$ so that the derivatives at the leftmost point is less than $\alpha \ge 0$ and at the right point is greater than $-\alpha$
	\[
	\frac{\alpha}{\sqrt{1+\alpha^2}} \ge \frac{\kappa \bar{x}}{2} + \kappa \bar{y}\sqrt{\frac{\kappa^{-2}}{\bar{x}^2+\bar{y}^2} - \frac{1}{4}},
	\]
	and
	\[
	\kappa \le \frac{2\bar{x}}{\bar{x}^2 +\bar{y}^2},
	\]
	where $\bar{x} = |x_2 - x_1|, \bar{y} = |y_2 - y_1|$.
\end{lemma}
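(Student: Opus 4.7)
The plan is to realize the connecting arc explicitly as the upper half of a circle of radius $r=1/\kappa$ passing through both endpoints, and then read off the slopes at the endpoints via elementary circle geometry. Without loss of generality assume $x_1 \le x_2$ and $y_1 \le y_2$, and set $L=\sqrt{\bar x^2+\bar y^2}$ for the chord length. The center $(x_c,y_c)$ of the circle lies on the perpendicular bisector of the chord at distance $d=\sqrt{r^2-L^2/4}$ from its midpoint, chosen on the side opposite the intended $\cap$-shaped arc. Rotating the chord direction $(\bar x,\bar y)/L$ by $-90^\circ$ gives the relevant unit normal $(\bar y,-\bar x)/L$, so that
\[
x_c - x_1 = \tfrac{\bar x}{2} + \tfrac{d\bar y}{L}, \qquad y_1 - y_c = \tfrac{d\bar x}{L} - \tfrac{\bar y}{2}.
\]
I then define $u$ on $[x_1,x_2]$ by $u(x) = y_c + \sqrt{r^2-(x-x_c)^2}$.

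For $u$ to be a well-defined function with finite derivatives at both endpoints I first check that the center lies strictly below the lower endpoint $(x_1,y_1)$, i.e.\ that $y_1 - y_c > 0$. This amounts to $2d\bar x > \bar y L$; squaring and using $d^2 = r^2-L^2/4$ reduces to $4r^2\bar x^2 > L^4$, i.e.\ $\kappa < 2\bar x/(\bar x^2+\bar y^2)$. The limiting equality case is exactly when the tangent at $(x_1,y_1)$ becomes vertical, and this is precisely condition (2) of the lemma.

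For the slope I use that the tangent to the circle at $(x_1,y_1)$ is perpendicular to the radius vector, so the angle $\theta_1$ between that tangent and the horizontal satisfies $\sin\theta_1 = (x_c-x_1)/r = \kappa(x_c-x_1)$. Plugging in the formula for $x_c-x_1$ together with the identity $d/L = \sqrt{\kappa^{-2}/L^2 - 1/4}$ yields
\[
\sin\theta_1 = \tfrac{\kappa\bar x}{2} + \kappa\bar y\sqrt{\tfrac{\kappa^{-2}}{\bar x^2+\bar y^2} - \tfrac{1}{4}},
\]
which is exactly the right-hand side of condition (1). Since $t\mapsto t/\sqrt{1+t^2} = \sin(\arctan t)$ is strictly increasing, the requirement $u'(x_1) = \tan\theta_1 \le \alpha$ is equivalent to $\sin\theta_1 \le \alpha/\sqrt{1+\alpha^2}$. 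Thus condition (1) delivers the required bound on the slope at the leftmost point.

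The analogous computation at the rightmost point gives $|\sin\theta_2| = \kappa|x_c-x_2| = \kappa\bigl|\tfrac{\bar x}{2} - \tfrac{d\bar y}{L}\bigr| \le \tfrac{\kappa\bar x}{2} + \tfrac{\kappa d\bar y}{L} = \sin\theta_1$, so the condition $u'(x_2)\ge -\alpha$ follows for free once (1) is in place. The only real obstacle in carrying out this plan is careful bookkeeping of signs when selecting the correct side of the chord on which to place the center; once the normal orientation is fixed, everything reduces to elementary circle geometry together with the identity $\sin(\arctan t) = t/\sqrt{1+t^2}$.
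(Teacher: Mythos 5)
Your proof is correct and is essentially the same construction as the paper's: both parametrize the upper arc of the circle of radius $\kappa^{-1}$ through the two points and read off the endpoint slopes. Where the paper simply posits the offset $l = \tfrac{\bar x}{2} + \bar y\sqrt{\kappa^{-2}/(\bar x^2+\bar y^2) - 1/4}$ and verifies $u(0)=0$, $u(\bar x)=\bar y$, you derive the same quantity as $x_c-x_1$ from the perpendicular-bisector geometry and the identity $d/L=\sqrt{\kappa^{-2}/L^2-1/4}$, and then use $\sin\theta_1 = \kappa(x_c-x_1)$ together with the monotonicity of $t\mapsto t/\sqrt{1+t^2}$ exactly as the paper implicitly does via $\kappa\le \tfrac{\alpha}{\sqrt{1+\alpha^2}}\tfrac{1}{l}$; your observation $|x_c-x_2|\le x_c-x_1$ is the same fact the paper uses to get the bound at the right endpoint.
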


\begin{proof}
	Assume wlog that $x_1 \le x_2$ and $y_1 \le y_2$ and define $\overline{x} \coloneqq x_2 - x_1$ and $\overline{y} \coloneqq y_2 - y_1$. It is then enough to prove the result in the case where $(x_1, y_1) = (0, 0)$ and $(x_2, y_2) = (\overline{x}, \overline{y})$.
	
	Define $u(x) \coloneqq \sqrt{\kappa^{-2} - (l-x)^2} - \sqrt{\kappa^{-2} - l^2}$ where
	\[
	l \coloneqq \frac{\overline{x}}{2} + \overline{y} \sqrt{\frac{\kappa^{-2}}{\overline{x}^2 + \overline{y}^2} - \frac{1}{4}}.
	\]
	Note that $u$ and $l$ are well-defined as $\kappa \le \frac{2\bar{x}}{\bar{x}^2 +\bar{y}^2}$. With this choice of $l$ it holds that $u(0) = 0$ and $u(\overline{x}) = \overline{y}$.
	
	Moreover, note that
	\[
	u'(x) = \frac{l-x}{\sqrt{\kappa^{-2} - (l-x)^2}}
	\]
	and that $u'(0) \le \alpha$ and $u'(\overline{x}) \ge -\alpha$ if $\kappa \le \frac{\alpha}{\sqrt{1+\alpha^2}} \frac{1}{l}$.
	
	Finally, note that $$\div\left( \frac{\nabla u}{\sqrt{1+|\nabla u|^2}}\right) = \frac{\mathrm{d}}{\mathrm{d}x} \frac{u'}{\sqrt{1+u'}} = -\kappa.$$
\end{proof}

\begin{lemma}[Local solution inside an obstacle]\label{lem:loc_inside_obstacle}
	Let $r > 0, \overline{f} \ge F \ge 0$ then there exists a solution $w \in C^1((-r, r))$ to
	\[
	-\div\left(\frac{\nabla w}{\sqrt{1+|\nabla w|^2}}\right) + \overline{f} \ge F \text{ in } (-r, r)
	\]
	with $w(-r) = w(r) = 0$ and $\div\left(\frac{\nabla w}{\sqrt{1+|\nabla w|^2}}\right) = F_{\rm in} \ge 0$ if
	\[
	F_{\rm in} \le \min\{\overline{f} - F, r^{-1}\}.
	\]
	
	Moreover, if the solution exists, then it holds that
	\[
	w'(r) = -w'(-r) = \frac{r}{\sqrt{F_{\rm in}^{-2} - r^2}}.
	\]
\end{lemma}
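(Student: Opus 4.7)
The lemma asks for an explicit one-dimensional profile that produces constant prescribed mean curvature $F_{\rm in}$ and vanishes at $\pm r$, together with the two constraints $F_{\rm in}\le \overline{f}-F$ (so that the differential inequality holds) and $F_{\rm in}\le r^{-1}$ (so that the profile exists). Since the divergence of $w'/\sqrt{1+(w')^{2}}$ is the curvature of the graph of $w$, the natural candidate is an arc of a circle of radius $F_{\rm in}^{-1}$. My plan is simply to write down this arc and verify each condition in turn.

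\textbf{Construction.} I would define
\[
    w(x) \coloneqq \sqrt{F_{\rm in}^{-2}-r^{2}} - \sqrt{F_{\rm in}^{-2}-x^{2}}, \qquad x\in(-r,r),
\]
which is well-defined on $(-r,r)$ (and continuous up to $\pm r$) exactly when $F_{\rm in}\le r^{-1}$. Boundary values are immediate: $w(\pm r)=0$. Differentiating gives
\[
    w'(x) = \frac{x}{\sqrt{F_{\rm in}^{-2}-x^{2}}} = \frac{F_{\rm in}\, x}{\sqrt{1-F_{\rm in}^{2} x^{2}}},
\]
and from $1+(w')^{2}=(1-F_{\rm in}^{2}x^{2})^{-1}$ one obtains $w'/\sqrt{1+(w')^{2}}=F_{\rm in}\,x$. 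Differentiating once more yields the key identity
\[
    \frac{\mathrm{d}}{\mathrm{d}x}\!\left(\frac{w'}{\sqrt{1+(w')^{2}}}\right) = F_{\rm in},
\]
so the prescribed mean-curvature condition holds as an equality on $(-r,r)$.

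\textbf{Verification of the inequality and the boundary slope.} Since the divergence equals the constant $F_{\rm in}\ge 0$, we have
\[
    -\operatorname{div}\!\left(\frac{\nabla w}{\sqrt{1+|\nabla w|^{2}}}\right) + \overline{f} \;=\; \overline{f}-F_{\rm in} \;\ge\; F
\]
throughout $(-r,r)$ precisely when $F_{\rm in}\le\overline{f}-F$. Combined with the existence constraint $F_{\rm in}\le r^{-1}$ coming from the square roots, this gives the hypothesis $F_{\rm in}\le\min\{\overline{f}-F,\, r^{-1}\}$ stated in the lemma. The boundary-slope identity is then just evaluation of the explicit formula for $w'$ at $x=\pm r$, yielding
\[
    w'(r) = -w'(-r) = \frac{r}{\sqrt{F_{\rm in}^{-2}-r^{2}}}.
\]

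\textbf{Obstacle.} There is essentially none: the entire lemma is a direct verification once the correct circular-arc ansatz is guessed. The only subtlety worth flagging is making sure that $F_{\rm in}\le r^{-1}$ is used to keep the radicand non-negative, and that the $C^{1}$-regularity is claimed on the open interval $(-r,r)$ so that a vertical tangent at the endpoints (when $F_{\rm in}=r^{-1}$) does not cause trouble; for $F_{\rm in}<r^{-1}$ the solution extends in $C^{1}$ up to the closed interval.
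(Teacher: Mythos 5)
Your proposal is correct and follows essentially the same approach as the paper: both use the circular-arc ansatz $w(x)=\sqrt{F_{\rm in}^{-2}-r^{2}}-\sqrt{F_{\rm in}^{-2}-x^{2}}$, note that it is well-defined when $F_{\rm in}\le r^{-1}$, differentiate to get the constant curvature and boundary slopes, and read off the constraint $F_{\rm in}\le\overline{f}-F$. You merely spell out the curvature computation a bit more explicitly than the paper does.
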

\begin{proof}
	Define $w(x) \coloneqq -\sqrt{F_{\rm in}^{-2} - x^2} + \sqrt{F_{\rm in}^{-2} - r^2}$ which is well-defined if $r \le \frac{1}{F_{\rm in}}$. Additionally, it holds
	\[
	w'(x) = \frac{x}{\sqrt{F_{\rm in}^{-2} - x^2}}
	\]
	and
	\[
	-\div\left(\frac{\nabla w}{\sqrt{1+|\nabla w|^2}}\right) + \overline{f} = -F_{\rm in} + \overline{f} \ge F.
	\]
	which proves the result.
\end{proof}

\begin{theorem}[Lower bound]\label{thm:lower_bound}
	There is a constant $c(r_0, f) > 0$ depending only on $r_0$ and $\overline{f}$ such that for each $$F \le  \sup \mathcal{F}(0) + \min\left\{ c(r_0, f) \sqrt{\dens}, \frac{f}{2} \right\}$$ the equation
	\[
	\mathcal{F}\left(v[u](x, t)\right) -\div\left(\tfrac{\nabla u(\cdot, t)}{\sqrt{1+|\nabla u(\cdot, t)|^2}}\right)(x) + \varphi(x, u(x, t), \omega)\partial R\left(v[u](x, t)\right) \ni F \text{ in } \R \times [0, \infty)
	\]
	where $v[u](x, t) = \frac{u_t(x, t)}{\sqrt{1+|\nabla u(x, t)|^2}}$ has almost surely a stationary, i.~e., $u_t = 0$, viscosity super-solution.
\end{theorem}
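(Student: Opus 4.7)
Since a stationary supersolution satisfies $v[u]=u_t/\sqrt{1+|\nabla u|^2}=0$, and $\mathcal F(0)\ni\sup\mathcal F(0)$, $\partial R(0)=[-1,1]\ni 1$, the set-valued supersolution inclusion reduces to the classical pointwise inequality
\[
-\div\Bigl(\tfrac{\nabla u}{\sqrt{1+|\nabla u|^2}}\Bigr)(x) \;\geq\; \tilde F - \varphi(x,u(x)), \qquad \tilde F\coloneqq F-\sup\mathcal F(0),
\]
where $\tilde F\le\min\{c(r_0,f)\sqrt{\dens},f/2\}$. I will build such a $u:\R\to[0,\infty)$ by pasting together the \emph{convex dips} of Lemma~\ref{lem:loc_inside_obstacle} on intervals $[x_i-r_0,x_i+r_0]$ around selected obstacle centres with the \emph{concave bridges} of Lemma~\ref{lem:connect} on the intermediate intervals. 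Inside each obstacle, set $F_{\rm in}=\min\{f/2,1/r_0\}$; then $F_{\rm in}\le \overline f-\tilde F$ with $\overline f=f$ so Lemma~\ref{lem:loc_inside_obstacle} applies, and the obstacle solution has exit/entry slopes of magnitude $\alpha_0\coloneqq r_0/\sqrt{F_{\rm in}^{-2}-r_0^2}$, a positive constant depending only on $r_0,f$. Each bridge is the arc from Lemma~\ref{lem:connect} with $\kappa=\tilde F$ and $\alpha=\alpha_0$; on its smooth interior one has $-\div(\nabla u/\sqrt{1+|\nabla u|^2})=\tilde F$, and at each junction $u$ is concave (the slope drops from $+\alpha_0$ to at most $\alpha_0$ on entering the bridge, and from at least $-\alpha_0$ to $-\alpha_0$ on re-entering the next obstacle), so no smooth test function can touch $u$ from below there and the viscosity supersolution condition is automatic.

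\textbf{Probabilistic backbone.} The real work is to extract, almost surely, a two-sided infinite sequence $(x_i,y_i)_{i\in\Z}$ of Poisson obstacle centres, ordered by $x_i$, with $y_i\ge 0$ and
\[
x_{i+1}-x_i \;\le\; C_0/\sqrt{\dens}, \qquad |y_{i+1}-y_i|\;\le\; C_0/\sqrt{\dens}
\]
for a deterministic $C_0=C_0(r_0,f)$. These two estimates are exactly what make Lemma~\ref{lem:connect}'s hypotheses --- the curvature bound $\tilde F(\bar x^2+\bar y^2)\le 2\bar x$ and the slope bound on $\alpha_0/\sqrt{1+\alpha_0^2}$ --- hold uniformly in $i$, both amounting to $c(r_0,f)$ being small enough given $C_0$ and $\alpha_0$. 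A natural construction is a percolation-style renormalisation of the upper half plane: tile it by squares of side $\ell=\Theta(1/\sqrt{\dens})$, declare a square \emph{good} if it contains at least one Poisson point (a Bernoulli event with parameter $p=1-e^{-\Theta(1)}$, which lies above the site-percolation threshold by tuning $\ell$), and extract a doubly infinite path through the supercritical cluster, in the spirit of \cite{Dondl_11b}; from each visited square pick any Poisson point as $(x_i,y_i)$.

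\textbf{Verification and the main obstacle.} With the backbone in place, Lemmas~\ref{lem:loc_inside_obstacle} and \ref{lem:connect} give the PDE classically on every smooth piece, concavity handles the junctions, and $|u|$ stays bounded on the backbone, so after a bounded upward shift the graph lies above $\{y=0\}$ and the associated level-set function $(x,y)\mapsto u(x)-y$ automatically has linear growth. The one genuine difficulty is the backbone construction: a naive ``one obstacle per vertical strip of width $1/\sqrt{\dens}$'' argument fails, since within any fixed horizontal band the maximal $x$-gap on $[-L,L]$ grows like $\log L/\sqrt{\dens}$, and once one enlarges the band to fix this, $|y_{i+1}-y_i|$ grows too and violates Lemma~\ref{lem:connect}'s slope inequality. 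Thus a truly two-dimensional percolation argument seems unavoidable, together with Borel--Cantelli bookkeeping for the finitely many cells where direct matching is impossible; once that is in hand, the rest of the proof is constant-chasing through the two local lemmas.
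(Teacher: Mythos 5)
Your overall strategy coincides with the paper's: reduce (by choosing $\sup\mathcal F(0)$ and $1\in\partial R(0)$) to the scalar inequality $-\div(\nabla u/\sqrt{1+|\nabla u|^2})\ge\tilde F-\varphi$, build a stationary graph by alternating the convex dips of Lemma~\ref{lem:loc_inside_obstacle} inside selected obstacles with the concave circular bridges of Lemma~\ref{lem:connect}, ensure the junctions are concave so the viscosity condition is automatic there, and feed in a percolation backbone at scale $\Theta(1/\sqrt\dens)$ to make the geometric hypotheses of Lemma~\ref{lem:connect} hold uniformly. Your constant choices ($F_{\rm in}=f/2$, $r$ truncated by $2/f$ and $r_0$) also match the paper.

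The one genuine gap is the probabilistic backbone, which you correctly flag as the crux but do not resolve. ``Extract a doubly infinite path through the supercritical cluster'' is not the right statement: an arbitrary path in the supercritical cluster of site percolation can wander vertically, double back, and has no reason to be a left--right crossing with uniformly bounded vertical increments. What the construction actually needs is a $1$-Lipschitz height function $L:\Z\to\N$ with $(k,L(k))$ open for all $k$, so that consecutive selected obstacles differ by at most one cell in the vertical direction, which is exactly what enters the slope and curvature estimates of Lemma~\ref{lem:connect}. This is the Lipschitz percolation theorem of Dirr--Dondl--Grimmett--Holroyd--Scheutzow (cited in the paper as \cite[Proposition~2.8]{Dondl_10a}), a strictly stronger statement than supercriticality of ordinary site percolation, and it holds almost surely once $p$ exceeds a threshold $p_c<1$. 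Once you invoke it, your ``Borel--Cantelli bookkeeping for the finitely many cells where direct matching is impossible'' becomes unnecessary and in fact misleading: the Lipschitz surface passes only through open cells, so there are no exceptional cells to patch. Replacing your vague percolation step by the Lipschitz percolation theorem, and dropping the Borel--Cantelli remark, would make your argument essentially the same as the paper's proof.
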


\begin{proof}
	As we want to construct a stationary super-solution and we have the freedom to choose any value in the set-valued terms $\mathcal{F}(0)$ and $\partial R(0)$ (see \cite{CourteDondl}), it is enough to find a super-solution to the equation
	\[
	-\div\left(\frac{\nabla u(\cdot, t)}{\sqrt{1+|\nabla u(\cdot, t)|^2}}\right)(x) + \varphi(x, u(x, t), \omega) = \tilde{F} \text{ in } \R \times [0, \infty),
	\]
	where $\tilde{F} = F - \sup \mathcal{F}(0)$.
	
	Let $d > 0$, $h>0$ and $\overline{f} \ge 0$ and define $l = C_0 h^{-1} + 2 r_{1}$ with
	\[
	C_0 = \left(\frac{-\log(1-p_c)}{\dens}\right)^{\frac{1}{n}},
	\]
	where $n = 1$ and $p_c = 1- (2n+2)^{-2}$. If we consider for each $k \in \mathbb{Z}$ and $j \in \N$ the cubes
	\begin{align*}
	\tilde{Q}_k &\coloneqq [k(l+d) + r_1, k(l+d)+l-r_1],\\
	\bar{Q}_k &\coloneqq [k(l+d), k(l+d)+l],\\
	\tilde{Q}_{k, j} &\coloneqq \tilde{Q}_k \times [(j-1)h+r_1, jh+r_1],
	\end{align*}
	then there is almost surely a random function $L:\mathbb{Z} \to \N$ with Lipschitz constant $1$ such that for all $k \in \mathbb{Z}$ there is $i \in \N$ such that
	\[
	(x_i, y_i) \in \tilde{Q}_{k, L(k)}.
	\]
	The existence of such an $L$ follows by the Lipschitz Percolation result from Dirr-Dondl-Grimett-Holroyd-Scheutzow, see also \cite[Proposition~2.8]{Dondl_10a}.
	
	This means that we have to connect an obstacle that lies inside $\tilde{Q}_{k, L(k)}$ with an obstacle that lies inside $\tilde{Q}_{k-1, L(k-1)}$ and an obstacle that lies inside $\tilde{Q}_{k+1, L(k+1)}$. Due to the symmetrie of the situation, it is enought to connect an obstacle inside $\tilde{Q}_{0, 0}$ with an obstacle that lies somewhere in $\tilde{Q}_{1,0} \cup \tilde{Q}_{1, 1}$.
	
	Let $r \le r_0$ then we start by constructing the local solution inside an obstacle, i.~e., we apply lemma \ref{lem:loc_inside_obstacle} for any selected obstacle, and we obtain a local solution inside the obstacle with curvature $F_{\rm in} \le \min\{ \overline{f}-\tilde{F}, r^{-1} \}$ and outward derivatives of intensity $\alpha\coloneqq\frac{r}{\sqrt{F_{\rm in}^{-2} - r^2}}$.
	
	Note, now that the distance $(\bar{x}, \bar{y}) = (|x_i - x_j|, |y_i - y_j|)$ between the selected obstacle $i$ in $\tilde{Q}_{0, 0}$ and the selected obstacle $j$ in $\tilde{Q}_{1,0} \cup \tilde{Q}_{1, 1}$ satisfies
	\begin{align*}
	\overline{x} &\in [d+2r_{1}-2r, d+ 2l - 2r_1-2r],\\
	\overline{y} &\in [0, 2h].
	\end{align*}
	Moreover, we assume that $2h \ge d$ so that $\bar{x} \ge \bar{y}$. Now, we have to connect $(x_i+r, y_i)$ with $(x_j-r, y_j)$ with an arc of a circle as in lemma \ref{lem:connect} such that the curvature is $F_{\rm out} \ge \tilde{F}$.
	
	Furthermore, we assume that $F_{\rm out} \ge \frac{\bar{y}}{\bar{x}^2} \ge \frac{2\bar{y}}{\bar{x}^2+\bar{y}^2}$ so that $\frac{\alpha}{\sqrt{1+\alpha^2}} \ge F_{\rm out} \overline{x}$ implies that $\frac{\alpha}{\sqrt{1+\alpha^2}} \ge \frac{F_{\rm out} \bar{x}}{2} + F_{\rm out} \bar{y}\sqrt{\frac{F_{\rm out}^{-2}}{\bar{x}^2+\bar{y}^2} - \frac{1}{4}}$.
	
	Finally, note that $\frac{\alpha}{\sqrt{1+\alpha^2}} = F_{\rm in} r$ and bring all together to see that if we choose $d=2C_0 h^{-1}$ and $F_{\rm out} = \frac{F_{\rm in} r}{4C_0 h^{-1}}$ we obtain
	\[
	F_{\rm out} = \frac{F_{\rm in} r}{4C_0 h^{-1}} \le \frac{F_{\rm in}r}{d + 2C_0 h^{-1} + 2r_1 - 2r} \le \frac{F_{\rm in}r}{\bar x}
	\]
	and
	\[
	F_{\rm out}=\frac{F_{\rm in} r}{4C_0 h^{-1}} \ge \frac{2h}{(4C_0h^{-1})^2} \ge \frac{\overline{y}}{\overline{x}^2}
	\]
	if $h \le \sqrt{2} (F_{\rm in} r)^{\frac{1}{2}} C_0^{\frac{1}{2}}$, hence in order to maximize $F_{\rm out}$ we choose $h$ so that equality holds. This leads to
	\[
	F_{\rm out} = \frac{\sqrt{2}(F_{\rm in}r)^{\frac{3}{2}}}{4} C_0^{-\frac{1}{2}}.
	\]
	
	Hence, we find a solution to the equation as long as
	\[
	\tilde{F} \le \min\left\{ \frac{\sqrt{2}(F_{\rm in}r)^{\frac{3}{2}}}{4} C_0^{-\frac{1}{2}}, f - F_{\rm in} \right\}.
	\]
	Now choosing $F_{\rm in} = \frac{f}{2}$ and $r = \min\{r_0, 2f^{-1} \}$, we obtain
	\[
	F - \sup \mathcal{F}(0) = \tilde{F} \le \min\left\{ c(r_0, f) \sqrt{\dens}, \frac{f}{2} \right\}.
	\]
\end{proof}

\begin{theorem}[Level-set supersolution]\label{thm:levelset_super}
	Assume that $\theta > 0$ is small enough, then under the assumptions of theorem \ref{thm:lower_bound} there exists almost surely a level-set supersolution to equation \eqref{eq:main_levelset}.
\end{theorem}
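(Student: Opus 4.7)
The plan is to lift the stationary graph supersolution $w$ from Theorem~\ref{thm:lower_bound} to a viscosity supersolution of the level-set equation \eqref{eq:main_levelset}. The natural candidate is
\[
u(x,y,t) := w(x) - y.
\]
This function has linear growth, its zero level set is exactly $\{y = w(x)\}$, which lies above $\{y=0\}$ because $w \ge 0$ by construction, and $\nabla u = (w'(x),-1) \ne 0$ on $\R^2$. Computing $|\nabla u| = \sqrt{1+w'(x)^2}$ and $\div(\nabla u / |\nabla u|) = w''(x)/(1+w'(x)^2)^{3/2}$, and noting that $u_t \equiv 0$ together with $\partial R(0) = [-1,1]$ and $\mathcal{F}(0) = [-\tau,\tau]$, the PDI \eqref{eq:main_levelset} reduces at points where $u$ is $C^2$ to the pointwise inequality
\[
\frac{w''(x)}{(1+w'(x)^2)^{3/2}} + F \;\le\; \sup\mathcal{F}(0) + \varphi(x,y)\,\eta_\theta\!\left(\tfrac{w(x)-y}{\sqrt{1+w'(x)^2}}\right).
\]

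I would then verify this inequality region by region, using the two-piece structure of $w$ from the proof of Theorem~\ref{thm:lower_bound}. On the connecting circle arcs between obstacles, the curvature equals $-F_{\rm out}$ with $F_{\rm out} \ge F - \sup\mathcal{F}(0)$ by construction, so the left-hand side is non-positive and the inequality holds trivially. At points on the zero level set inside an obstacle, the inequality reduces to the graph PDI, since $\eta_\theta(0)=1$ and $\varphi=f$ on the mollified obstacle core through which $w$ is forced to pass. For points slightly off the zero level set inside an obstacle column, continuity of $\varphi$ and $\eta_\theta(s) \to 1$ as $s \to 0$ preserve the inequality provided the strip $\{|w(x)-y| < \theta\sqrt{1+w'(x)^2}\}$ is contained in $\{\varphi \ge f/2\}$, which dictates how small $\theta$ has to be in terms of $r_0, r_1$ and the (bounded) slope of $w$.

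The main difficulty I anticipate is twofold. First, $w$ is only $C^1$ at the junctions between obstacle pieces and connecting arcs, where its second derivative jumps; at these points the viscosity definition uses the semi-continuous envelopes of the mean-curvature operator, and I would verify that the jump is in the favorable direction for a supersolution by a direct test-function argument. Second, and more delicate, are points $(x,y)$ lying in an obstacle column (so the curvature of $u$ equals $F_{\rm in}>0$) with $y$ so far from the obstacle center that both $\varphi(x,y)=0$ and $\eta_\theta(u/|\nabla u|)=0$; here the naive $u=w-y$ is not a supersolution. I would handle this by modifying $u$ outside the $\eta_\theta$-tube in each obstacle column, smoothly splicing in $y$ between the inner obstacle profile and an outer downward-curving arc of curvature at least $F - \sup\mathcal{F}(0)$ taken from the connecting-arc construction. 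The splicing band can be chosen of height comparable to $\theta$, small enough to remain inside the mollified obstacle disc in which $\varphi$ is non-zero; the resulting function $u$ is still globally $C^1$, has the same zero level set $\{y=w(x)\}$, and satisfies the level-set PDI in the viscosity sense, completing the proof.
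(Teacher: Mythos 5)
Your starting point, $u(x,y,t)=w(x)-y$, is exactly the paper's choice $\overline u=v(x)-y$, and your pointwise reduction of the level-set PDI to
\[
\frac{w''(x)}{(1+w'(x)^2)^{3/2}}+F\;\le\;\sup\mathcal F(0)+\varphi(x,y)\,\eta_\theta\!\Bigl(\tfrac{w(x)-y}{\sqrt{1+w'(x)^2}}\Bigr)
\]
is the same inequality the paper works with. Where you diverge is in noticing the real difficulty: the paper disposes of the passage from the graph inequality (which carries $\varphi(x,v(x))$) to the level-set inequality (which carries $\varphi(x,y)\eta_\theta(\cdot)$) with a single line, ``choosing $\theta$ small enough \dots\ we see that'' the PDI holds. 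You are right that this is insufficient as written. At a point $(x,y)$ with $x$ in an obstacle column of the constructed profile (so that the level sets of $w(x)-y$ all have curvature $F_{\rm in}>0$ there) but with $|w(x)-y|/\sqrt{1+w'(x)^2}>\theta$, one has $\eta_\theta=0$, the obstacle term drops out, and the inequality reduces to $\sup\mathcal F(0)\ge F_{\rm in}+F$, which fails for any $F>\sup\mathcal F(0)$. Shrinking $\theta$ only enlarges the bad region. So identifying this as a problem requiring an additional modification of $u$ is a correct observation that the paper's proof glosses over.

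However, your proposed repair does not close the gap as stated. You want the level sets of the modified $u$ far from $\{u=0\}$ to have curvature $\le -(F-\sup\mathcal F(0))$ inside each obstacle column, obtained by ``splicing in \dots\ an outer downward-curving arc.'' But outside the obstacle column the far level sets are still vertical translates of the connecting arcs, which arrive at the column endpoints $x_i\pm r$ with slopes $\mp\alpha$ (with $\alpha>0$ fixed by Lemma~\ref{lem:loc_inside_obstacle}). Any replacement profile on $[x_i-r,x_i+r]$ that joins these arcs $C^1$ must therefore have derivative increasing from $-\alpha$ to $+\alpha$, hence strictly positive curvature somewhere in the column — exactly what you were trying to remove. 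Allowing corners at $x_i\pm r$ does not help either: for a viscosity supersolution the admissible corners are those where the derivative \emph{drops} across the corner, and that constraint forces the replacement's slope to go from $\le-\alpha$ to $\ge+\alpha$, again incompatible with non-positive curvature. A repair in the spirit you suggest needs a genuinely different mechanism — for instance truncating $u$ to $\operatorname{median}(-M,\,w(x)-y,\,M)$ for $M$ so small that the truncation band lies inside $\{\eta_\theta\ge\tfrac12\}\cap\{\varphi\ge f-\beta\}$, which turns the PDI degenerate ($\nabla u=0$) outside the band; but then one must separately verify the supersolution property at the upper (max-type, convex) kink, which is not automatic since the maximum of two supersolutions is not in general a supersolution. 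In short: your diagnosis of the difficulty is right and goes beyond what the paper records, but the splicing you outline cannot be carried out because of this slope-matching obstruction, so the proof is not yet complete.
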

\begin{proof}
	Let $v$ be the graphical supersolution from theorem \ref{thm:lower_bound} and define
	\[
	\overline{u}(x, y, t) \coloneqq v(x) - y.
	\]
	First, we note that the zero levelset of $\overline{u}$ is the graph of $v$ and that $\overline{u}_t = 0$, i.~e., $\overline{u}$ is stationary.
	
	Due to the previous theorem, there is some $\beta$ depending on all parameters such that for all $x \in \R \times I$ there is some $\mu \in \mathcal{F}(0)$ such that
	\begin{align*}
	&\sqrt{1+|\nabla v(x)|^2}\mu - \sqrt{1+|\nabla v(x)|^2}\div\left(\tfrac{\nabla v(x)}{\sqrt{1+|\nabla v(x)|^2}}\right) + \sqrt{1+|\nabla v(x)|^2}\varphi(x, v(x)) \\
	&~\ge \sqrt{1+|\nabla v(x)|^2}(F+\beta).
	\end{align*}
	As $|\nabla \overline{u}(x, y, t)| = \sqrt{1+|\nabla v(x)|^2}$ for all $(x,y,t) \in \R^2 \times I$, we can rewrite the previous equation in the following way
	\[
	|\nabla \overline{u}|\mu - |\nabla \overline{u}| \div(\tfrac{\nabla \overline{u}}{|\nabla \overline{u}|}) + |\nabla \overline{u}|\varphi(x, v(x)) \ge |\nabla \overline{u}|(F+\beta)
	\]
	Now, choosing $\theta$ small enough (depending on all parameters) we see that
	\[
	|\nabla \overline{u}|\mu - |\nabla \overline{u}| \div(\tfrac{\nabla \overline{u}}{|\nabla \overline{u}|}) + |\nabla \overline{u}|\varphi(x, \overline{u})\eta_{\theta}(\tfrac{\overline{u}}{|\nabla \overline{u}|}) \ge |\nabla u|F.
	\]
	Now, noting that $\mu \in \mathcal{F}(\tfrac{\overline{u}_t}{|\nabla \overline{u}|}) = \mathcal{F}(0)$ shows that $\overline{u}$ is a level-set supersolution to \eqref{eq:main_levelset}.
\end{proof}

\section{Scaling Upper Bound}\label{sec:upper_bound}

In order to show the existence of a propagating subsolution to equation \eqref{eq:main}, it is enough to find an unbounded path of positive width that start at $\{y=0\}$. Inside this path, we can then construct a propagating subsolution. However, such a propagating subsolution is only ballistic if we can guarantee that we find distinct paths with a positive density. We can use a percolation result for stacked Lipschitz surfaces from A.E. Holroyd and  J.B. Martin \cite[Theorem~2.1]{holroyd2014} which we restate here for the convenience of the reader for $d=2$.
\begin{theorem}[Stacked Lipschitz surfaces {\cite[Theorem~2.1]{holroyd2014}}, $d=2$]\label{thm:stacked_lipschitz_surface}
	Consider site percolation on $\mathbb{Z}^2$, i.~e., a site $(k, l) \in \mathbb{Z}^2$ is open with a probability $p$ and closed with probability $1-p$ (independently of other sites). If $p\ge p_c$, where $p_c < 1$ is deterministic, then a.s. there are (random) functions $L_n : \mathbb{Z} \to \mathbb{Z}$, $n\in \mathbb{Z}$, with the following properties:
	\begin{enumerate}
		\item[(L1)] The site $(k, L_n(k))$ is open for all $k\in \mathbb{Z}$, $n \in \mathbb{N}$.
		\item[(L2)] For each $n$, $L_n$ is $1$-Lipschitz, i.~e., $|L_n(k) - L_n(l)| \le 1$ whenever $|k-l|= 1$.
		\item[(L3)] $L_n(k) > 2n$ for all $k\in \mathbb{Z}$, $n \in \mathbb{Z}$.
		\item[(L4)] $L_{n-1}(k) < L_n(k)$ for all $k\in \mathbb{Z}$, $n \in \mathbb{Z}$.
		\item[(L5)] $(L_{n+m}(k+l) - 2m)_{n, k}$ and $(L_n(k))_{n, k}$ are equal in law for all $l \in \mathbb{Z}$ and $m\in \mathbb{Z}$.
	\end{enumerate}
\end{theorem}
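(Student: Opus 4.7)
The plan is to build the stack iteratively, using a single-surface Lipschitz percolation result (the same type invoked implicitly in Section~\ref{sec:lower_bound}) as the elementary brick, and peeling off one layer at a time with a stopping-surface/exploration argument to preserve the Bernoulli property above the already-revealed layers. I take $p$ close enough to $1$ that the single-surface construction succeeds with the required vertical gap. For brevity I will work with $n \in \mathbb{N}$; extension to $n \in \mathbb{Z}$ is obtained afterwards by a thermodynamic-limit/shift-invariance argument.

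First I would recall the base case: for $p$ close to $1$, there exists a.s.\ a $1$-Lipschitz function $\Lambda : \mathbb{Z} \to \mathbb{Z}$ with $(k, \Lambda(k))$ open for every $k$ and $\Lambda(k) \ge 2$ for every $k$. I fix the canonical pointwise-minimal choice
\[
\Lambda(k) \coloneqq \inf\bigl\{\, y \in \mathbb{Z} : \exists\, 1\text{-Lipschitz } L \text{ with } L(k)=y \text{ and } (j, L(j)) \text{ open for all } j \,\bigr\},
\]
whose finiteness and gap-from-zero bound follow from a Peierls argument counting closed ``cones'' of sites, which decays super-exponentially when $p$ is close to $1$.

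To produce the full sequence, set $L_0 \equiv -1$ and recursively define $L_n$ as the pointwise-minimal $1$-Lipschitz open surface lying strictly above $L_{n-1}$, i.e.
\[
L_n(k) \coloneqq \inf\bigl\{\, y > L_{n-1}(k) : \exists\, 1\text{-Lipschitz } L \text{ with } L(k)=y,\ L(j) > L_{n-1}(j),\ (j, L(j)) \text{ open for all } j \,\bigr\}.
\]
Properties (L1), (L2) and (L4) are built into the definition. Property (L3), $L_n(k) > 2n$, follows by induction once one knows the vertical gap $L_n(k) - L_{n-1}(k) \ge 2$, which in turn reduces to the $\Lambda \ge 2$ bound applied to the strip strictly above $L_{n-1}$ via the shifted Bernoulli percolation living there. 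Property (L5) is immediate from the translation-invariance of the driving i.i.d.\ field together with the canonicity of the pointwise-minimal recipe: a horizontal shift by $l$ commutes with the construction, and a vertical shift by $m$ relabels $L_{n+m}$ as $L_n + 2m$ because (L3) guarantees exactly two rows of vertical budget are spent per layer.

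The main obstacle, and the only genuinely subtle point, is justifying that ``Bernoulli percolation on the strip strictly above $L_{n-1}$'' really is an independent Bernoulli percolation. This would fail for an arbitrary random function but holds when $L_{n-1}$ is a \emph{stopping surface}, meaning that the event $\{L_{n-1} = \ell\}$ is measurable with respect to the states of sites at heights $\le \ell$ only. Encoding the minimality clause this way requires rephrasing the construction of $L_{n-1}$ as an explicit exploration: reveal sites row by row from below, certifying the non-existence of any lower admissible surface by exhibiting an all-closed $1$-Lipschitz blocking layer underneath (the complementary Peierls event from Step~1). Once this exploration is set up, the strong Markov property of Bernoulli product measures with respect to stopping surfaces gives the desired conditional independence, the induction closes, and the extension to $n \in \mathbb{Z}$ is obtained by taking a weak limit of the translated finite stacks and invoking ergodicity to identify the limit law.
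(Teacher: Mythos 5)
Note first that the paper does not actually prove this theorem: it is quoted, specialized to $d=2$, directly from Holroyd and Martin \cite[Theorem~2.1]{holroyd2014} and restated ``for the convenience of the reader.'' There is therefore no in-paper argument for you to match; the only point of comparison is the original source.

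As a stand-alone attempt, the iterative peel-off scheme together with the stopping-surface/strong-Markov step is a reasonable skeleton for (L1)--(L4), but it does not deliver (L5), and (L5) is the genuinely hard part. A preliminary bookkeeping issue: taking $L_n$ to be the pointwise-minimal open Lipschitz surface \emph{strictly} above $L_{n-1}$ only forces $L_n\ge L_{n-1}+1$; to obtain the gap of $2$ you need for (L3) with your anchor $L_0\equiv -1$, you must instead minimize over surfaces satisfying $L\ge L_{n-1}+2$ and then re-verify (L4) for that modified recipe. The serious problem is (L5). Your justification -- that ``(L3) guarantees exactly two rows of vertical budget are spent per layer'' -- conflates the one-sided bound $L_n>2n$ with an equality. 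The pointwise-minimal Lipschitz surface lying at or above the (shifted, still Lipschitz) level $L_{n-1}+2$ will generically sit strictly above that level on a set of positive density whenever $p<1$, so the mean increment $E[L_{n+1}(k)-L_n(k)]$ of your iterative stack is strictly larger than $2$. But (L5) with $m=1$, $l=0$ forces this mean to equal $2$ exactly (this is precisely what Remark~\ref{rk:applying_stacked_lipschitz_surfaces} extracts from it), so the stack you construct does not satisfy (L5) at all -- this is a failure of the construction, not merely a gap in the write-up. Relatedly, anchoring at $L_0\equiv -1$ and running only $n\ge 1$ breaks the covariance under $(n,k,y)\mapsto(n+m,k+l,y+2m)$ that (L5) asserts, and the closing gesture at a thermodynamic limit and ``invoking ergodicity'' cannot repair this, since no weak limit of stacks whose mean gap exceeds $2$ can have mean gap $2$. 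Producing a bi-infinite, jointly shift-covariant family with exactly two rows of expected vertical budget per layer is the real content of Holroyd and Martin's theorem and requires a different construction than the greedy one you propose.
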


\begin{remark}\label{rk:applying_stacked_lipschitz_surfaces}
	Note that \textit{(L5)} implies that the mean distance between two adjacent Lipschitz surfaces is two, as
	\[
	E[L_{n+1}(k) - L_n(k)] = E[2 + L_n(k) - L_n(k)] = 2.
	\]
	This shows that the Lipschitz-surfaces occur with a positive density. As the Poisson Point Process, which we use to generate the obstacles, see definition \ref{def:obstacles}, is ergodic, we can apply a rotation to the whole room to find Lipschitz surfaces that are not horizontal but vertical. 	
\end{remark}

We can apply this theorem and the remark to construct different paths that do not intersect an occur with a positive density. We start by dividing the domain in cubes of side-length $h$, i.~e., define
\[
Q_{k, l} = [hk, h(k+1)] \times [hl, h(l+1)].
\]

A path $P$ of width $h$ is the union of a subset of such cubes such that there is a function $p: \mathbb{N} \to \mathbb{N}\times \mathbb{Z}$ with
\[
Q_{k, l} \in P \iff (k, l) \in p(\mathbb{N})
\]
and $p$ is injective, it holds $p(k+1) \in p(k) + \left\{ (-1, 0), (1, 0), (0, 1) \right\}$, and $\lim_{k\to\infty} p(k)_2 = +\infty$.

\begin{lemma}[Existence of paths with density]\label{lem:paths_with_density}
	Assume that $2r_1 < (-\frac{1}{5}\log(p_c))^{\frac{1}{2}} \dens^{-\frac{1}{2}}$, then there exists almost surely a family of functions $p_n:\mathbb{N}\to \mathbb{N}\times \mathbb{Z}$ such that $p_n$ describe a path of width $h =\frac{1}{2} (-\frac{1}{5}\log(p_c))^{\frac{1}{2}}\dens^{-\frac{1}{2}}$ with
	\[
	Q_{p_n(k)} \cap \bigcup B_{r_1}(x_i, y_i) = \emptyset,
	\]
	i.~e., the path does not intersect any obstacle.
\end{lemma}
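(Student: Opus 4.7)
The plan is to translate the obstacle-avoidance statement into site percolation on $\Z^2$ and then invoke Theorem~\ref{thm:stacked_lipschitz_surface}.

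Partition $\R^2$ into the cubes $Q_{k,l}$ of side $h$. Declare a site $(k,l)\in\Z^2$ \emph{open} iff the enlarged cube $Q_{k,l}\oplus B_{r_1}(0)$ contains no point of the Poisson process; this is precisely the condition $Q_{k,l}\cap\bigcup_{i} B_{r_1}(x_i,y_i) = \emptyset$. The marginal probability is bounded below by
\[
    \exp\bigl(-\dens(h+2r_1)^2\bigr) \;\ge\; \exp(-9\dens h^2) \;=\; p_c^{9/20} \;>\; p_c,
\]
using the hypothesis $2r_1 < 2h$ together with $\dens h^2 = -\tfrac{1}{20}\log p_c$. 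Since $r_1 < h$, two sites at $L^\infty$-distance $\ge 3$ have disjoint influence regions and are therefore independent, so the open field is finite-range dependent. By a Liggett--Schonmann--Stacey stochastic domination, this field dominates an iid Bernoulli$(p_c)$ field on $\Z^2$; the constant $\tfrac{1}{5}$ is calibrated so that the marginal probability is large enough for this domination to apply.

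Applying Theorem~\ref{thm:stacked_lipschitz_surface} to the dominating iid field produces, for each $n\in\Z$, a $1$-Lipschitz function $L_n:\Z\to\Z$ with $(k,L_n(k))$ open. By Remark~\ref{rk:applying_stacked_lipschitz_surfaces} we may apply a $\pi/2$-rotation to reorient the surfaces vertically, so that $(L_n(l),l)$ is open for every $l$ and $|L_n(l+1)-L_n(l)|\le 1$. Define the path $p_n:\N\to\N\times\Z$ by tracing each surface upward: when $L_n(l+1)=L_n(l)$, take a single $(0,1)$-move; when $L_n(l+1)-L_n(l)=\pm 1$, insert the intermediate cube $(L_n(l+1),l)$ via a $(\pm 1,0)$-move followed by $(0,1)$. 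A mild strengthening of the open condition (additionally requiring the $r_1$-enlargements of the two horizontal neighbors to be Poisson-free) guarantees that the intermediate cube is also obstacle-free, without upsetting the marginal bound $>p_c$. Then (L1) and (L4) give injectivity and that every traversed cube is obstacle-free, (L3) gives $p_n(k)_2\to\infty$, and (L5) yields the positive density of the family $\{p_n\}$.

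The main obstacle is the tension between the dependence structure of the open events---forced on us by the nonzero obstacle radius $r_1$, which lets an obstacle centered in one cube intrude into a neighboring one---and the iid hypothesis of Theorem~\ref{thm:stacked_lipschitz_surface}. Reconciling them via Liggett--Schonmann--Stacey requires that the marginal open probability remain above $p_c$ after the comparison, and it is exactly this requirement that dictates the quantitative constants $\tfrac{1}{2}$ and $\tfrac{1}{5}$ in the statement.
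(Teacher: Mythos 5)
Your translation into site percolation is in the same general spirit as the paper's, but both the role you assign to the constant $\tfrac{1}{5}$ and your domination step are not justified, and this is where the proof breaks. In the paper, a site $(k,l)$ is declared open when the plus-shaped union $Q_{k,l}\cup Q_{k\pm1,l}\cup Q_{k,l\pm1}$ contains no obstacle centres; this region has area $5h^2$, so the marginal is $e^{-5\rho h^2}$, and that five-cube cross is the sole source of the $\tfrac15$ in the lemma (requiring $e^{-5\rho h^2}>p_c$ gives exactly $h<(-\tfrac15\log p_c)^{1/2}\rho^{-1/2}$). You instead declare a site open when $Q_{k,l}\oplus B_{r_1}(0)$ is Poisson-free, obtain the marginal bound $p_c^{9/20}$, and claim that $\tfrac15$ is ``calibrated'' so that a Liggett--Schonmann--Stacey comparison applies. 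That claim does not hold up: LSS-type stochastic domination of an iid Bernoulli$(p_c)$ field by a finite-range-dependent field requires the marginal to exceed a threshold depending on $p_c$ and the dependence range that is in general much closer to $1$ than $p_c^{9/20}$; the inequality $p_c^{9/20}>p_c$ is not the relevant criterion and you never verify the actual quantitative LSS hypothesis. As written the domination step is a genuine gap, and the constant in the lemma cannot be recovered from it.

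The second point of divergence is the path construction. Your open condition certifies only the single cube $Q_{k,l}$, so along a Lipschitz surface a diagonal step $L_n(l+1)-L_n(l)=\pm1$ leaves you with two open cubes meeting only at a corner; you then insert an intermediate cube whose openness you do not have, and propose ``a mild strengthening of the open condition'' to cover it, without re-deriving either the marginal bound or the constants. The paper's plus-shaped condition is designed to make this step unnecessary: if $(k,L(k))$ is open then all four neighbours of $Q_{k,L(k)}$ are already obstacle-centre-free, so consecutive open sites on the Lipschitz surface are always joined by an obstacle-centre-free $2\times2$ block. Through this block one threads a path of width $\tilde h=h/2$, and avoiding the obstacle \emph{balls} (rather than just their centres) requires $h-\tilde h>r_1$; that is exactly where the stated hypothesis $2r_1<(-\tfrac15\log p_c)^{1/2}\rho^{-1/2}$ and the prefactor $\tfrac12$ in the path width originate. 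In short, you have reassigned both constants to mechanisms that do not produce them, and the two steps (LSS domination, intermediate-cube openness) where you rely on that reassignment are the ones left unproved.
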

\begin{proof}
	This follows immediately by theorem \ref{thm:stacked_lipschitz_surface} and remark \ref{rk:applying_stacked_lipschitz_surfaces} considering a cube $Q_{k, l}$ to be open if
	\[
	Q_{k, l} \cup Q_{k+1, l} \cup Q_{k-1, l} \cup Q_{k, l+1} \cup Q_{k, l-1} \text{ contain no obstacle centers }.
	\]
	The probability of this event is $e^{-5\dens h^2}$ and as this probability has to be greater then $p_c$, we see that $h <  (-\frac{1}{5}\log(p_c))^{\frac{1}{2}}\dens^{-\frac{1}{2}}$. We can construct from a Lipschitz-surface a path of width $\tilde{h}$, if $\tilde{h} \le \frac{h_1}{2}$. To guarantee that such a path does not intersect any obstacles $h-\tilde{h}$ has to be greater then $r_1$. Hence, by choosing $\tilde{h} = \frac{h}{2}$ all these conditions are satisfied.
	
	Finally, the different paths shall not intersect. To achieve this, we ignore all Lipschitz-surfaces $L_n$ with $n$ even. This still leads to paths that occur with positive density.
\end{proof}

\begin{lemma}[Local Propagating Subsolution]\label{lem:local_propagating_subsolution}
	Let $P$ be a path of width $h$, then there exists a propagating subsolution $\Gamma(t)$, $\Gamma(0)=\{y=0\}$, such that
	\[
	\mathcal{F}(v_n(\xi)) \le \kappa(\xi) + F \text{ for all } \xi \in \Gamma(t)
	\]
	and for each compact set $K \subset \R^2$ with $\Gamma(t_0) \subset K$ there is a $t_1 > t_0$ such that $\Gamma(t_1) \not\subset K$,
	if $F > \sup\mathcal{F}(0) + \frac{2}{h}$.
\end{lemma}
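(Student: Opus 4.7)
The essential observation is that the path $P$ is obstacle-free, so $\varphi \equiv 0$ on $P$ and the inequality to verify inside $P$ reduces to $\mathcal{F}(v_n) \le \kappa + F$; outside of $P$ we keep $\Gamma(t)$ stationary and flat, so $v_n = 0$, $\kappa = 0$, and because $\partial R(0) \ni -1$ the full inequality is trivially satisfied there. The plan is therefore to build an advancing front inside the obstacle-free corridor $P$.

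Concretely, I would let $\gamma$ denote the piecewise-linear centerline of $P$ obtained by joining the centers of the successive cubes $Q_{p(k)}$; this is a polyline of segments of length $h$ with occasional $90^\circ$ turns. Define $\Gamma(t) = \partial A(t)$ with $A(t) = \{y \le 0\} \cup T(t)$, where $T(t) \subset P$ is the tubular neighborhood of width $h/2$ of the initial arc of $\gamma$ of length $s(t) = V t$, with each right-angle turn smoothed on the inner side by an arc of radius $h/2$ and on the outer side by an arc of radius $h$. Since both radii are at most $h$, the smoothed turns fit inside the corresponding corner cube of $P$, and the resulting boundary inside $P$ is $C^1$ with curvature $\kappa$ satisfying $-2/h \le \kappa \le 2/h$, the extreme value $\kappa = -2/h$ being attained only on the inner-turn arcs. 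The cap at the advancing end of $T(t)$ is taken to be a straight chord perpendicular to $\gamma$ (with a small arc of radius $\ge h/2$ inserted while the cap pivots across a bend), joining the two walls at two outward-convex corners.

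Verification then splits into cases. On the stationary walls and smoothed arcs of the tube, $v_n = 0$ and $\kappa \ge -2/h$, so the required inequality $\min \mathcal{F}(0) \le \kappa + F$ follows from the hypothesis $F > \sup \mathcal{F}(0) + 2/h \ge \min \mathcal{F}(0) + 2/h$. On the cap one has $\kappa = 0$ and $v_n = V$; since $F > \sup \mathcal{F}(0)$, a sufficiently small $V > 0$ makes $\min \mathcal{F}(V) \le F$ by (upper semi)continuity of $\mathcal{F}$ at $0$. At the two convex cap-to-wall corners the distributional curvature is $+\infty$ and the viscosity subsolution inequality is vacuous. Finally, the propagation condition follows at once from the unboundedness of $P$ (Lemma~\ref{lem:paths_with_density} together with Theorem~\ref{thm:stacked_lipschitz_surface}): since $s(t) = Vt \to \infty$, the tube $T(t)$ eventually exits every prescribed compact set $K \supset \Gamma(t_0)$.

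The main difficulty is to make the construction rigorous as a viscosity subsolution at the non-smooth points of $\Gamma(t)$, chiefly at the cap--wall junctions, where the normal velocity jumps from $V$ on the cap to $0$ on the walls. Convex corners there are handled by the standard observation that any upper test function at a convex corner must carry infinite curvature, which makes the subsolution inequality automatic; concave singularities are avoided by construction, since all concave arcs carry curvature exactly $-2/h$, a value already absorbed by the hypothesis $F > \sup \mathcal{F}(0) + 2/h$. The residual smoothing issues that arise when the cap traverses a bend of $\gamma$ can be handled by letting the cap pivot through the turn via an extra arc of radius $\ge h/2$, which does not alter any of the curvature or velocity bounds above.
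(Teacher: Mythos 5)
Your overall strategy — build an obstacle-free corridor (via Lemma~\ref{lem:paths_with_density}), keep the interface stationary outside it, and push a front through it — is the same as the paper's. The paper, however, advances the front cube by cube using a \emph{circular arc that is tangent to the side walls} (Step~2 of the paper's proof: $\Gamma(t)\cap Q = \{y=\sqrt{(h/2)^2 - x^2}+v_0t\}\cup\{|x|=h/2,\,y\le v_0 t\}$, a semicircle of radius $h/2$ meeting the walls with vertical tangent), whereas you use a flat chord meeting the walls at right angles. This is where your argument breaks.

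Recall that with the paper's convention ($u>0$ on $\Omega_+$, $\kappa=\operatorname{div}(\nabla u/|\nabla u|)$), a disc $\Omega_+=B_r$ has $\kappa=-1/r<0$; more generally $\kappa<0$ where $\Omega_+$ is convex. Your cap--wall junctions are \emph{convex} corners of $A=\Omega_+$ (interior angle $\pi/2$), so the distributional curvature there is $-\infty$, not $+\infty$ as you assert. The subsolution inequality is therefore not vacuous — it fails. Concretely, at a stationary convex corner of $A$ the test function $\phi(x,y,t)=1-y-Cx^2$ (in local coordinates with the corner at the origin and $A\subset\{y\le -|x|\}$) touches $u^*$ from above, has $\nabla\phi=(0,-1)$, $\phi_t=0$ and $\operatorname{div}(\nabla\phi/|\nabla\phi|)=-2C$; since $\min\mathcal{F}(0)+\varphi\min\partial R(0)=-\tau-\varphi$ is bounded and $-2C+F\to-\infty$, the inequality is violated for $C$ large. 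The same mechanism applies to the moving corners of your advancing cap: in the co-moving frame the spatial dual cone at the corner is two-dimensional, so test functions with $\nabla\phi$ in its interior impose no constraint on $D^2\phi$ and can again have arbitrarily negative level-set curvature. (The base corners where the tube meets $\{y=0\}$, and the inner-turn arcs, are concave corners of $A$ and are indeed harmless — there the distributional curvature is $+\infty$ and the only admissible test functions have degenerate gradient — but this is exactly the opposite sign situation from the cap corners.)

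The repair is to replace the straight chord by a semicircular cap of radius $h/4$ (the tube half-width) tangent to both walls, as the paper does; the tangency eliminates the corner entirely, the cap curvature is $\kappa=-4/h$ (well, $-2/(\text{tube width})$), and the normal velocity at the tip is bounded by a constant $v_0$ chosen so that some $\mu_0\in\mathcal{F}(v_0)$ satisfies $\mu_0\le F-2/h$, exactly as in the paper's Step~1 and Step~2. With that change, and with a slightly more careful treatment of the re-initialisation at bends (the paper's Step~3, which still involves a moving convex corner that the paper itself handles somewhat informally), your construction would coincide essentially with the paper's. As written, the claim that the cap--wall corners carry "$+\infty$ curvature" and are "vacuous" is a sign error that invalidates the verification at exactly the points where it is needed.
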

\begin{proof}
	We construct the solution in the following way. First we start with an initialisation step, and afterwards we discuss how the solution passes through $Q_{p(1)}$. Then we construct the solution by induction. Depending on whether $p(k+1) - p(k) = (0, 1)$, $p(k+1)-p(k) = (-1, 0)$, or $p(k+1)-p(k) = (1, 0)$ we apply a construction so that the solution passes (in the first case) or rotates (in the second and third case) through a cube and remains (up to translation and rotation) again in the initial position (which is also the position of the interface after initialisation). As it does not matter if we rotate left or right, it is enough to prove the following three steps
	\begin{enumerate}
		\item (Re)-Initialisation (see figure \ref{fig:1}),
		\begin{figure}
			\def\svgwidth{150px}
			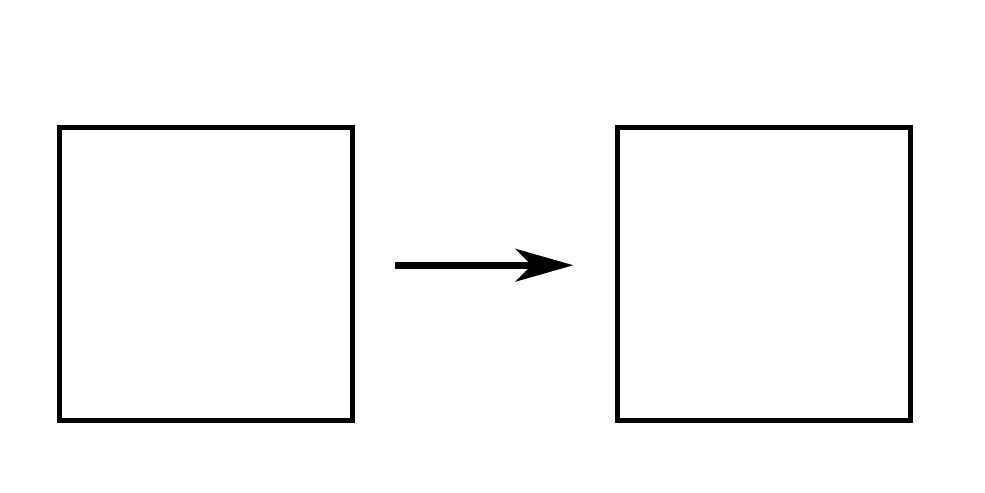
			\caption{Evolution in step (1), due to the force in the normal direction a circle of radius $h/2$ arises.}
			\label{fig:1}
		\end{figure}
		\item Passing through a cube (see figure \ref{fig:2}),
		\begin{figure}
			\def\svgwidth{150px}
			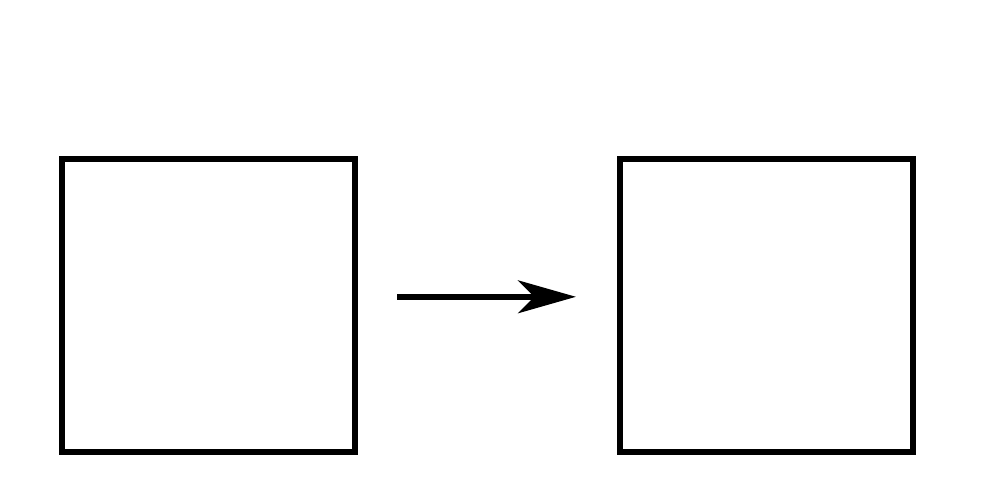
			\caption{Evolution in step (2), this step is used if the next cube is above the current cube and in this case it is enough to just to just gain some height.}
			\label{fig:2}
		\end{figure}
		\item Rotating through a cube (see figure \ref{fig:3}).
		\begin{figure}
			\def\svgwidth{300px}
			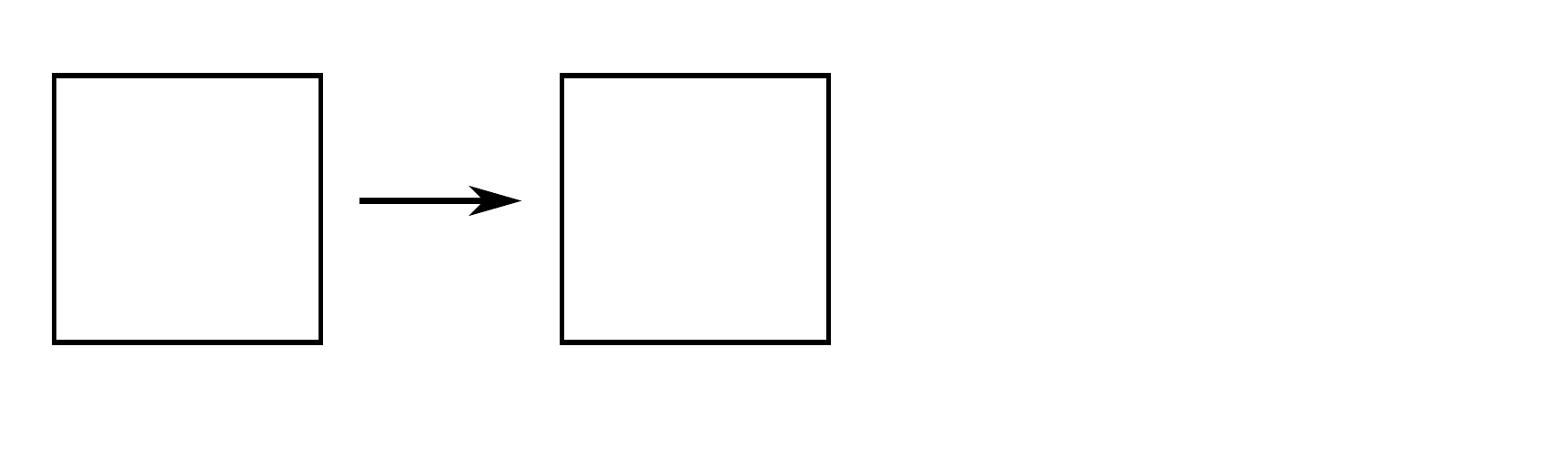
			\caption{Evolution in step (3), note that this is actually only a capped version of step (2) combined with a reinitialisation from step (1).}
			\label{fig:3}
		\end{figure}
	\end{enumerate}
	
	Define $v_0 > 0$ such that  there is some $\mu_0 \in \sup \mathcal{F}(v_0)$ with $\mu_0 \le F - \frac{2}{h}$. Note that this $v_0$ exists by the assumption on $F$. This $v_0$ is more or less the normal velocity on the non-stationary parts of $\Gamma(t)$.	
	\begin{enumerate}
		\item (Re)-Initialisation\newline
		This step assumes that $\Gamma(t_0)$ is flat. After applying a translation (in space and time) and a rotation (in space) we can assume that $Q_{p(0)} = Q \coloneqq [-\frac{h}{2}, \frac{h}{2}]\times[0, h]$ and $\Gamma(0) = \{y=0\}$.
		Define $$\Gamma(t) \cap Q = \left\{
		y = \sqrt{\kappa(t)^{-2} - x^2} - \sqrt{\kappa(t)^{-2} - \left(\tfrac{h}{2}\right)^2}, |x| \le \frac{h}{2}
		\right\}$$
		with $\kappa(t) = \frac{2v_0t}{v_0^2 t^2+\left(\frac{h}{2} \right)^2}$, $t\le \frac{h}{2v_0}$. We only have to check that $\Gamma(t)$ is a subsolution in $Q_{p(0)}$. However, as $\Gamma$ is graphical in this part, we can compute the normal velocity for some $\xi=(x, y(x)) \in \Gamma(t)$ by
		\begin{align*}
		\sup\mathcal{F}\left(v_n(\xi)\right) &= \sup\mathcal{F}\left(\frac{y_t}{\sqrt{1+|\nabla y|^2}}\right) \\
		&= \sup\mathcal{F}\left(\left(\frac{\sqrt{\kappa(t)^{-2} - x^2}}{\sqrt{\kappa(t)^{-2} - \left( \frac{h}{2}\right)^2}} - 1\right) \frac{\kappa'(t)}{\kappa(t)^2}\right) \\
		&\le\sup\mathcal{F}\left(\left(\frac{1}{\sqrt{1 - \kappa(t)^2\left( \frac{h}{2}\right)^2}} - 1\right) \frac{\kappa'(t)}{\kappa(t)^2}\right) \\
		&=\sup\mathcal{F}\left(v_0\right) \ni \mu_0 \le -\kappa(t) + F = \kappa(\xi) + F,
		\end{align*}
		as $\kappa(t) \le \frac{2}{h}$.
		\item Passing through a cube\newline
		We can again translate (in space and time) and rotate (in space) everything back in $Q \coloneqq [-\frac{h}{2}, \frac{h}{2}]\times [0, h]$ and assume that $$\Gamma(0) \cap Q = \left\{ y = \sqrt{ \left(\tfrac{h}{2}\right)^2 - x^2} \right\}.$$
		Define in $Q$ the evolution
		\[
		\Gamma(t) \coloneqq \left\{ y = \sqrt{ \left(\tfrac{h}{2}\right)^2 - x^2} + v_0 t  \right\} \cup \{ |x| = \frac{h}{2}, y \le v_0 t \}
		\]
		for $t \in (0, \frac{h}{v_0})$ and it holds $\sup\mathcal{F}\left(v_n(\xi)\right) = \sup\mathcal{F}\left(v_0\right)\ni \mu_0 \le \kappa(\xi) + F$.
		\item Rotating through a cube\newline
		This is actually a combination of (2) followed by (1). First, we use the same construction as in (2) but cap the interface at the upper boundary of the cube, i.~e., we replace the evolution by
		\[
		\Gamma(t) \coloneqq \left\{ y = \min\left\{\sqrt{ \left(\tfrac{h}{2}\right)^2 - x^2} + v_0 t, h\right\}  \right\} \cup \{ |x| = \frac{h}{2}, y \le v_0 t \}
		\]
		if $t \le \frac{h}{v_0}$. This is still a subsolution and at $t= \frac{h}{v_0}$ the interface is a square and we can apply (1) on the flat face where the next cube in the path is to reinitialise the flow.
	\end{enumerate}
	Using this procedure, we can construct a solution of positive velocity, however to obtain a solution that leaves compact sets, we have to assure that each step in the construction of the solution only requires a finite amount of time. This is true, if $\frac{h}{v_0} < \infty$, i.~e., $v_0 > 0$ which is true by assumption.
\end{proof}

\begin{theorem}[Upper Bound]\label{thm:upper_bound}
	There is a constant $c(r_0) > 0$ such for each
	\[
	F >  \sup\mathcal{F}(0) + \frac{4}{(-\frac{1}{5}\log(p_c))^{\frac{1}{2}}} \sqrt{\dens}
	\]
	there exists a ballistic subsolution $\Gamma(t)$ with $\Gamma(0) = \{y=0\}$.
\end{theorem}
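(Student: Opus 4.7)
The plan is to combine the two preparatory lemmas directly: first use Lemma \ref{lem:paths_with_density} to extract an almost-sure family of disjoint obstacle-free paths of width $h$ occurring with positive density, and then populate each path with the local propagating subsolution produced by Lemma \ref{lem:local_propagating_subsolution}. The bookkeeping step is to match the two widths: Lemma \ref{lem:paths_with_density} delivers paths of width
\[
h = \tfrac{1}{2}\bigl(-\tfrac{1}{5}\log p_c\bigr)^{1/2}\dens^{-1/2},
\]
so that the threshold $F > \sup\mathcal{F}(0) + \tfrac{2}{h}$ required by Lemma \ref{lem:local_propagating_subsolution} translates exactly to the hypothesis of the theorem, $F > \sup\mathcal{F}(0) + \tfrac{4}{(-\tfrac{1}{5}\log p_c)^{1/2}}\sqrt{\dens}$. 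Thus the local construction is available inside each of the paths $P_n$ produced by Lemma \ref{lem:paths_with_density}.

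Next I would assemble a global subsolution $\Gamma(t)$ starting from $\Gamma(0)=\{y=0\}$ as follows. Inside each path $P_n$ let $\Gamma_n(t)$ denote the local propagating subsolution from Lemma \ref{lem:local_propagating_subsolution}, initialised on the bottom segment $P_n\cap\{y=0\}$; outside the union $\bigcup_n P_n$ leave the interface at $\{y=0\}$, which is trivially a stationary subsolution of \eqref{eq:main} since $0\in\mathcal{F}(0)$ and $F>\sup\mathcal{F}(0)$. The level-set formulation is convenient here: define $u(x,y,t)$ so that its zero level-set is the union of $\Gamma_n(t)$ with the flat pieces between paths; the subsolution check on \eqref{eq:main_levelset} then reduces to the already-verified check inside each $P_n$ and to a flat-interface check outside.

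The ballistic property, i.e.\ strictly positive mean swept area per unit time, follows from property (L5) of Theorem \ref{thm:stacked_lipschitz_surface} and Remark \ref{rk:applying_stacked_lipschitz_surfaces}: the vertical paths constructed from the Lipschitz surfaces occur with a positive linear density (even after discarding every second surface to guarantee non-intersection), and within each path the local subsolution advances with a strictly positive normal velocity $v_0>0$ chosen so that $\sup\mathcal{F}(v_0)\le F-\tfrac{2}{h}$. Hence after time $t$ the swept region contains a horizontal band of total length proportional to $t$ within each of the infinitely many paths per unit horizontal length, yielding a mean swept area that grows at least linearly in $t$. By ergodicity of the underlying Poisson point process, this positive mean is realised almost surely.

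The main obstacle I expect is the verification at the boundaries between a path and the stationary flat region. At such points $\Gamma(t)$ has a corner (the bottom of a rising arc meets the flat piece at $y=0$), and one must check that this is compatible with being a viscosity subsolution of \eqref{eq:main_levelset}. The standard remedy is to note that test functions touching $u$ from above at a corner must have gradient equal to the subdifferential of the corner, and that the flat stationary exterior already satisfies the equation with strict inequality (since $F>\sup\mathcal{F}(0)$), so any one-sided test inequality is dominated by that strict slack; alternatively, one may take the pointwise maximum of the exterior flat subsolution with each $\Gamma_n$, and invoke the fact that the pointwise maximum of viscosity subsolutions is again a viscosity subsolution, which is the cleanest way to avoid any explicit corner analysis.
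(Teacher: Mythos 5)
Your proposal matches the paper's proof, which is the single sentence ``This follows by applying Lemma~\ref{lem:local_propagating_subsolution} to each path from Lemma~\ref{lem:paths_with_density}.'' You supply the arithmetic check that $2/h = 4(-\tfrac{1}{5}\log p_c)^{-1/2}\sqrt{\dens}$, the gluing of local subsolutions with the flat exterior, and the corner analysis at path boundaries (via the max-of-subsolutions argument) that the paper leaves implicit, all of which are consistent with the intended argument.
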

\begin{proof}
	This follows by applying lemma \ref{lem:local_propagating_subsolution} to each path from lemma \ref{lem:paths_with_density}.
\end{proof}

\begin{theorem}[Level-set subsolution]\label{thm:levelset_sub}
	Assume that $\theta > 0$ is small enough, then under the assumptions of theorem \ref{thm:upper_bound} there exists almost surely a ballistic level-set subsolution to equation \eqref{eq:main_levelset}.
\end{theorem}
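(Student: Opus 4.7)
The plan is to mirror the proof of Theorem \ref{thm:levelset_super}: from the geometric ballistic subsolution $\Gamma(t)$ produced by Theorem \ref{thm:upper_bound}, I would build a level-set subsolution $\underline{u}$ of \eqref{eq:main_levelset} whose zero level set moves ballistically and which satisfies $\underline{u}(\cdot, 0) = -y$ together with the linear growth required for comparison. The essential structural fact to exploit is that each step of Lemma \ref{lem:local_propagating_subsolution} takes place inside a cube of side $h$ that is free of obstacle discs by a deterministic margin $h/2 - r_1 > 0$, so $\varphi$ vanishes identically in a tubular neighborhood of $\Gamma(t)$.

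For each path $\pi$ from Lemma \ref{lem:paths_with_density}, I would construct a local level-set function $\underline{u}_\pi$ with zero set $\Gamma_\pi(t)$ and extend it by $-y$ outside a thin tube around $\pi$. Inside each active cube of $\pi$, the moving part of $\Gamma_\pi(t)$ is, after a rotation adapted to the current cube, the graph $y = g(x,t)$ of a smooth function, and I would set $\underline{u}_\pi(x,y,t) = g(x,t) - y$ exactly as in the supersolution construction. The stationary vertical side-walls left behind during the passing-through step are incorporated by taking a maximum with affine functions of the form $\pm(x-x_0) + $ const localized to the relevant cube. The global candidate is then
\[
\underline{u}(x,y,t) = \max\bigl\{-y,\ \sup_{\pi} \underline{u}_\pi(x,y,t)\bigr\},
\]
which inherits linear growth and the initial condition $\underline{u}(\cdot,0) = -y$, and which, by the standard fact that the (upper semicontinuous envelope of a) supremum of viscosity subsolutions of a degenerate parabolic PDE is itself a viscosity subsolution, remains a subsolution once each $\underline{u}_\pi$ is shown to be one.

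The level-set subsolution property for $\underline{u}_\pi$ is verified along the lines of Theorem \ref{thm:levelset_super}. On every graphical piece one has $|\nabla \underline{u}_\pi| = \sqrt{1+|\nabla g|^2} \ge 1$, so the set $\{|\underline{u}/|\nabla \underline{u}|| < \theta\}$ is contained in a Euclidean tube around $\Gamma(t)$ of width at most $\theta \sup|\nabla \underline{u}|$. Choosing $\theta$ small enough that this tube stays strictly inside the obstacle-free cubes supplied by Lemma \ref{lem:paths_with_density}, the coupling $\varphi(x)\eta_\theta(\underline{u}/|\nabla \underline{u}|)$ vanishes near the interface. The inequality then reduces, after cancellation of $|\nabla \underline{u}|$ on graphical pieces, to the geometric subsolution condition $\mathcal{F}(v_n) \le \kappa + F$ already established in Lemma \ref{lem:local_propagating_subsolution}. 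Ballistic propagation of the zero level set of $\underline{u}$ is inherited directly from the ballistic behaviour of the individual $\Gamma_\pi(t)$ and the positive density of the path family.

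The step I expect to be the main obstacle is the verification of the viscosity subsolution inequality at the non-smooth loci where local pieces meet: corners of $\Gamma(t)$, the contact between the moving graphical part and the stationary vertical walls in the passing-through step, and transitions between successive cubes along a path. These are handled by the general envelope/maximum principle for viscosity subsolutions, provided compatible selections in the set-valued operators $\mathcal{F}$ and $\partial R$ can be exhibited across pieces. On the stationary walls any $\mu \in \mathcal{F}(0)$ together with $\nu \in \partial R(0) = [-1,1]$ realizing $\mu + \varphi \nu \le F$ suffices, while on the moving pieces one reuses the selection $\mu_0 \in \mathcal{F}(v_0)$ already used in Lemma \ref{lem:local_propagating_subsolution}. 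A small but genuine technical point hidden in this step is that the stationary walls must carry $\varphi = 0$ (otherwise the required inequality may fail), which is guaranteed by placing the path strictly inside the obstacle-free cubes of Lemma \ref{lem:paths_with_density} with the margin used above.
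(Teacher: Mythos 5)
Your proposal is correct and follows the same strategy as the paper's (very terse) proof: build the level-set function locally around the moving front of each path, choose $\theta$ small enough that $\eta_\theta(u/|\nabla u|)$ is supported inside the obstacle-free tube of Lemma~\ref{lem:paths_with_density} so the $\varphi$-term drops out, and reduce to the geometric subsolution inequality from Lemma~\ref{lem:local_propagating_subsolution}. The only difference is that you spell out the details the paper leaves implicit in ``use the same construction'' -- in particular how to handle the non-graphical vertical walls and the corners via a supremum of local pieces and the envelope/maximum principle for viscosity subsolutions -- which is a genuine and correct elaboration, not a departure.
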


\begin{proof}
	As in the proof of theorem \ref{thm:levelset_super} we can use the same construction to obtain a level-set supersolution. We have to choose $\theta$ so small that in a $\theta$ neighborhood of the solution curve from theorem \ref{thm:upper_bound} there are no obstacles.
\end{proof}

\section*{Acknowledgement}
LC acknowledges support from the Fonds National de la Recherche, Luxembourg (AFR Grant 13502370). PWD gratefully acknowledges partial support from the Deutsche Forschungsgemeinschaft (Grant No.~Do 1412/4-1 within SPP 2265). MO gratefully acknowledges support from the Deutsche Forschungsgemeinschaft (DFG, German Research Foundation) {\sl via} project 211504053 - SFB 1060 and project 390685813 -  GZ 2047/1 - HCM.

\section*{Declaration of Interest}
Declarations of interest: none.

\bibliographystyle{abbrv}
\bibliography{bib}

\end{document}